\newtheorem{theorem}{Theorem}[section]
\newtheorem{definition}[theorem]{Definition}
\newtheorem{proposition}[theorem]{Proposition}
\newtheorem{lemma}[theorem]{Lemma}
\newtheorem{claim}[theorem]{Claim}
\newtheorem{corollary}[theorem]{Corollary}
\newtheorem{remark}[theorem]{Remark}
\newtheorem{question}[theorem]{Question}
\newenvironment{example}{\noindent{\bf Example}\hspace*{1em}}{\bigskip}
\newcommand{\remove}[1]{}
\newcommand{\comment}[1]{}
\def\phi{\varphi}
\newcommand{\mathify}[1]{\ifmmode{#1}\else\mbox{$#1$}\fi}
\renewenvironment{proof}[1][Proof]{
\noindent \textbf{#1: }}{$\Box$
\bigskip}
\newcommand{\ignore}[1]{}
\renewcommand{\Pr}{{\bf Pr}}
\begin{document}

\title{On The Probability of a Rational Outcome for Generalized
Social Welfare Functions on Three Alternatives}

\author{
Nathan Keller\footnote{This research is supported by the Adams
Fellowship Program of the Israel Academy of Sciences and Humanities.} \\
Einstein Institute of Mathematics, Hebrew University\\
Jerusalem 91904, Israel\\
{\tt nkeller@math.huji.ac.il}\\
}

\maketitle

\begin{abstract}
In~\cite{Kalai-Choice}, Kalai investigated the probability of a
rational outcome for a generalized social welfare function (GSWF)
on three alternatives, when the individual preferences are uniform
and independent. In this paper we generalize Kalai's results to a
broader class of distributions of the individual preferences, and
obtain new lower bounds on the probability of a rational
outcome in several classes of GSWFs. In particular, we show that
if the GSWF is monotone and balanced and the distribution of the
preferences is uniform, then the probability of a rational outcome
is at least 3/4, proving a conjecture raised by Kalai. The tools
used in the paper are analytic: the Fourier-Walsh expansion of
Boolean functions on the discrete cube, properties of the
Bonamie-Beckner noise operator, and the FKG inequality.

\end{abstract}

\section{Introduction}

Consider a situation in which a society of $n$ members selects a
ranking amongst $m$ alternatives. In the election process, each
member of the society gives a ranking of the alternatives (the
ranking is a full linear ordering; that is, indifference between
alternatives is not allowed). The set of the rankings given by the
individual members is called a {\it profile}. Given the profile,
the ranking of the society is determined according to some
function, called a {\it generalized social welfare function}
(GSWF).

The GSWF is a function $F:L^n \rightarrow
\{0,1\}^{{{m}\choose{2}}}$, where $L$ is the set of linear
orderings on $m$ elements. In other words, given the profile
consisting of linear orderings supplied by the voters, the
function determines the preference of the society amongst each of
the ${{m}\choose{2}}$ pairs of alternatives. If the output of $F$
can be represented as a full linear ordering of the $m$
alternatives, then $F$ is called a {\it social welfare function}
(SWF).

Throughout this paper we consider GSWFs satisfying the
{\it Independence of Irrelevant Alternatives} (IIA) condition: For every
two alternatives $A$ and $B$, the preference of the entire society
between $A$ and $B$ depends only on the preference of each
individual voter between $A$ and $B$. This natural condition on
GSWFs can be traced back to Condorcet~\cite{Condorcet}.

The Condorcet's paradox demonstrates that if the number of
alternatives is at least three and the GSWF is based on the
majority rule between every pair of alternatives, then there exist
profiles for which the voting procedure cannot yield a full order
relation. That is, there exist alternatives $A,B,$ and $C$, such
that the majority of the society prefers $A$ over $B$, the
majority prefers $B$ over $C$, and the majority prefers $C$ over
$A$. Such situation is called {\it irrational choice} of the
society. Arrow's impossibility theorem~\cite{Arrow} asserts that
if a GSWF on at least three alternatives satisfies the IIA
condition, has all the possible orderings of the alternatives in
its range, and is not a dictatorship (that is, the preference of
the society is not determined by a single member), then there
exists a profile for which the choice of the society is irrational.

Since the existence of profiles leading to an irrational choice
has significant implications on voting procedures, an extensive
research has been conducted in order to evaluate the probability
of irrational choice for various GSWFs. Most of the results in
this area are summarized in~\cite{Gehrlein}. In addition to its
significance in Social Choice theory, this area of research leads
to interesting questions in probabilistic and extremal
combinatorics (see~\cite{Mossel}).

In 2002, Kalai~\cite{Kalai-Choice} suggested an analytic approach
to this study. He showed that for GSWFs on three alternatives
satisfying the IIA condition, the probability of irrational choice
can be computed by a formula related to the Fourier-Walsh
expansion of the GSWF. Using this formula he presented a new proof
of Arrow's impossibility theorem under additional assumption of
neutrality and established upper bounds on the probability of
irrational choice for specific classes of GSWFs.

\medskip

In this paper we generalize the results of~\cite{Kalai-Choice} in
several directions. As in~\cite{Kalai-Choice}, we focus on GSWFs
on three alternatives satisfying the IIA condition. We denote the
alternatives by $A,B,$ and $C$, and the choice functions amongst
the pairs $(A,B),(B,C),$ and $(C,A)$ by $f,g,$ and $h$,
respectively (see Figure~\ref{fig:Tri}).


\begin{figure}[tb]
\begin{center}
\scalebox{0.8}{
\includegraphics{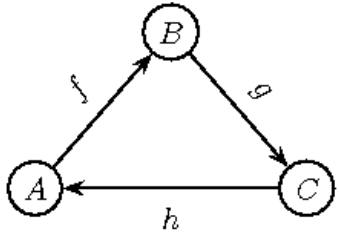}
}
\caption{The Alternatives and the Choice Functions}
\label{fig:Tri}
\end{center}
\end{figure}

We examine GSWFs satisfying (some of) the following
conditions:
\begin{itemize}

\item {\it Balance} - A GSWF is {\it balanced} if the choice
functions $f,g,$ and $h$ are balanced (i.e., satisfy
$\mathbb{E}[f] = \mathbb{E}[g] = \mathbb{E}[h] = 1/2$).

\item {\it Neutrality} - A GSWF is {\it neutral} if it is invariant
under permutations of the alternatives. In particular, this implies
that the choice functions satisfy $f=g=h$, and that $f$ is balanced.

\item {\it Symmetry} - We call a GSWF {\it symmetric} if it is
invariant under a transitive group of permutations of the voters.
In particular, this implies that the choice functions are far from a
dictatorship.\footnote{Note that this definition of symmetry is much
weaker than the usual definition requiring that the function depends
only on the Hamming weight of the input. Important classes of
functions, including the tribes functions~\cite{Ben-Or}, satisfy
our definition of symmetry.}

\item {\it Monotonicity} - A GSWF is {\it monotone} if the choice
functions $f,g,$ and $h$ are monotone increasing.\footnote{The
definition of a monotone increasing function on the discrete cube
is given is Section~\ref{Sec:Lower-Bounds}.}
\end{itemize}

The first direction in our paper is a generalization of the
possible distributions of the individual preferences.
In~\cite{Kalai-Choice} it is assumed that
the individual preferences are independent and uniformly
distributed. We show that the results of~\cite{Kalai-Choice} are
valid (under some modifications) also for non-uniform
distributions of the preferences, as long as the voters are
independent, and for each ordering of the alternatives, the
probability of the ordering is equal to the probability of the
inverse ordering. We call such distributions {\it even product
distributions}. In particular, we prove the following
generalization of Theorem~5.1 of~\cite{Kalai-Choice}:
\begin{theorem}
Consider a GSWF on three alternatives satisfying the IIA
condition. If the distribution of the preferences is an even
product distribution such that the probability of each preference
is positive, and the GSWF is neutral and symmetric, then the
probability of irrational choice is bounded away from zero,
independently of the number of the voters.\footnote{In the context
of this theorem, ``bounded away'' means that the probability is
greater than a constant, depending only on the distribution of
the preferences, and not on the number of voters and the choice
functions. Theorem~5.1 in~\cite{Kalai-Choice} states that if the
preferences are distributed uniformly, then the value of this
constant is at least $0.0808$.}
\label{Thm:Bounded-Away}
\end{theorem}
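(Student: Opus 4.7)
The plan is to generalize the Fourier-analytic approach of~\cite{Kalai-Choice} to even product distributions. For each voter $i$, encode the three pairwise preferences as signs $x_i, y_i, z_i \in \{\pm 1\}$ corresponding to the comparisons $(A,B)$, $(B,C)$, $(C,A)$. Evenness of the single-voter distribution implies that each marginal is uniform, while the pairwise correlations $\rho_1 = \Ee[x_iy_i]$, $\rho_2 = \Ee[y_iz_i]$, $\rho_3 = \Ee[x_iz_i]$ satisfy $\rho_1+\rho_2+\rho_3 = -1$ (because the cyclic triples $(\pm 1,\pm 1,\pm 1)$ are forbidden by transitivity of a linear ordering) and $|\rho_j|<1$ strictly, since every ordering has positive probability.

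Using the identity $\indicator[f=g=h] = (1+fg+gh+fh)/4$ for $\pm 1$-valued $f,g,h$, I expand each pair-expectation in the Fourier--Walsh basis; voter independence and the uniform marginals force $\Ee[\chi_S(x)\chi_T(y)] = \rho_1^{|S|}\delta_{S,T}$, yielding $\Ee[fg] = \inner{f,T_{\rho_1}g}$ where $T_\rho$ is the Bonami--Beckner noise operator, and similarly for the other two pairs. Summing:
\[
\Pr[\text{irrational}] = \tfrac{1}{4}\bigl(1 + \inner{f,T_{\rho_1}g} + \inner{g,T_{\rho_2}h} + \inner{f,T_{\rho_3}h}\bigr).
\]
Neutrality forces $f=g=h$ and balances this common function ($\hat f(\emptyset)=0$), so the formula collapses to
\[
\Pr[\text{irrational}] = \tfrac{1}{4} + \tfrac{1}{4}\sum_{S \neq \emptyset}(\rho_1^{|S|}+\rho_2^{|S|}+\rho_3^{|S|})\,\hat f(S)^2.
\]
The degree-$1$ term contributes $-W_1/4$ with $W_1 = \sum_{|S|=1}\hat f(S)^2$; the degree-$2$ coefficient is at least $1/3$ by the power-mean inequality applied to three numbers summing to $-1$; and the higher degrees contribute with absolute value bounded by powers of $\rho^* := \max_j|\rho_j| < 1$.

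The crucial step uses the symmetry hypothesis together with the Friedgut--Kalai--Naor theorem to bound $W_1$ strictly below $1$. Indeed, if $W_1 \geq 1-\tau$, then $f$ is $O(\tau)$-close in $L^2$ to some dictator $\pm x_i$; but a permutation in the transitive symmetry group carries $i$ to any other coordinate $j$ while preserving $f$, forcing $f$ to be $O(\tau)$-close to $\pm x_j$ as well. The triangle inequality, applied to the fact that $x_i$ and $x_j$ are at squared $L^2$-distance $2$ (by uniform marginals and voter independence, both provided by the even product distribution), then forces $\tau$ to exceed an absolute constant $c>0$, independent of $n$, of $f$, and of the particular distribution.

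Combining the Fourier identity with $W_1 \leq 1-c$ and the tail bound yields a strictly positive lower bound on $\Pr[\text{irrational}]$ depending only on the distribution (through $\rho^*$) and on $c$. The hardest case I anticipate is that of highly skewed distributions, where $\rho^* \to 1$ and the crude tail bound on degrees $|S| \geq 2$ degrades; there a finer analysis will be required, either controlling the odd-degree power sums $\rho_1^{2k+1}+\rho_2^{2k+1}+\rho_3^{2k+1}$ by leveraging both $\rho_1+\rho_2+\rho_3 = -1$ and $|\rho_j|<1$, or treating the noise stabilities $\inner{f,T_{\rho_j}f}$ globally in combination with a parity decomposition of $\hat f$.
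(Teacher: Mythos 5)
Your route is essentially the paper's. The identity
\[
\Pr[\mathrm{irrational}] \;=\; \tfrac14 \;+\; \tfrac14\sum_{S\neq\emptyset}\bigl(\rho_1^{|S|}+\rho_2^{|S|}+\rho_3^{|S|}\bigr)\hat f(S)^2,
\qquad \rho_1+\rho_2+\rho_3=-1,
\]
is exactly Theorem~\ref{Thm:Formula} specialized to the neutral balanced case (your $\rho_j$ are $4\alpha-1,\,4\beta-1,\,4\gamma-1$), and your derivation of it is sound. Your handling of the level-one weight is a legitimate variant: the paper invokes Kalai's result that among neutral symmetric functions the level-one weight is maximized by the majority function (giving the explicit constant $\tfrac14-d_m$), while you obtain an absolute, if weaker, constant from the FKN theorem together with transitivity of the symmetry group; both arguments require $n\geq2$ and both are fine.

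The genuine gap is the one you flag yourself: the levels $|S|\geq3$. Bounding their contribution by powers of $\rho^{*}=\max_j|\rho_j|$ gives nothing when the distribution is skewed: as $\alpha\to\tfrac12$ we have $\rho\to(1,-1,-1)$, so $\rho_1^{2k+1}+\rho_2^{2k+1}+\rho_3^{2k+1}\to-1$ for \emph{every} $k$, and the odd tail can consume the entire $\tfrac14$. The ``finer analysis'' you defer is precisely the technical core of the paper's proof. It consists of Lemma~\ref{Lemma0.0}, which shows that for all $k\geq1$ and all $\rho_j\in[-1,1]$ with $\rho_1+\rho_2+\rho_3=-1$,
\[
\rho_1^{2k+1}+\rho_2^{2k+1}+\rho_3^{2k+1}\;\geq\;\rho_1^{3}+\rho_2^{3}+\rho_3^{3}
\]
(proved by a Lagrange-multiplier and convexity analysis, not by a one-line power-mean estimate), together with Inequality~(\ref{Eq3.3}), which shows that the right-hand side is strictly greater than $-1$; the latter is exactly where the hypothesis that every preference has positive probability enters, via strict convexity of $t\mapsto t^3$. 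Granting these, the even-level power sums are nonnegative and the cubic power sum is nonpositive, so combining with your bound $W_1\leq 1-c$ yields $\Pr[\mathrm{irrational}]\geq\tfrac{c}{4}\bigl(1+\rho_1^3+\rho_2^3+\rho_3^3\bigr)>0$, a constant depending only on the distribution. Until you prove the displayed inequality and its strict companion, the skewed regime remains open and the argument is incomplete.
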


The second direction is obtaining new lower bounds on the
probability of a rational choice for several classes of GSWFs. In
particular, we prove the following conjecture raised
in~\cite{Kalai-Choice}:
\begin{theorem}
Consider a GSWF on three alternatives satisfying the IIA
condition. If the individual preferences are independent and
uniformly distributed, and the GSWF is monotone and balanced, then
the probability of a rational choice is at least 3/4.
\label{Thm:Monotone}
\end{theorem}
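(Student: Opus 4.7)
The plan is to reduce the lower bound on the probability of a rational outcome to three pairwise upper bounds of the form $\mathbb{E}[f(x)g(y)]\le 1/4$, and to establish each such bound via the FKG inequality after a judicious change of variables.

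First I would rewrite $\Pr[\text{irrational}]$ by inclusion-exclusion. An outcome is irrational exactly when $f(x)=g(y)=h(z)=1$ or $f(x)=g(y)=h(z)=0$; combined with the balance assumption $\mathbb{E}[f]=\mathbb{E}[g]=\mathbb{E}[h]=1/2$, this gives
\[
\Pr[\text{irrational}] = \mathbb{E}[f(x)g(y)h(z)] + \mathbb{E}\bigl[(1-f(x))(1-g(y))(1-h(z))\bigr] = -\tfrac12 + \mathbb{E}[fg] + \mathbb{E}[gh] + \mathbb{E}[fh],
\]
where $\mathbb{E}[fg]$ abbreviates $\mathbb{E}[f(x)g(y)]$ and similarly for the other pairs; in particular the triple term cancels. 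Hence $\Pr[\text{rational}] = 3/2 - \mathbb{E}[fg] - \mathbb{E}[gh] - \mathbb{E}[fh]$, so it suffices to prove $\mathbb{E}[fg],\mathbb{E}[gh],\mathbb{E}[fh] \le 1/4$.

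To prove $\mathbb{E}[f(x)g(y)] \le 1/4$, I would observe that under the uniform distribution on the six rational orderings, each coordinate pair $(x_i,y_i)$ takes the values $(0,0),(0,1),(1,0),(1,1)$ with probabilities $\tfrac16,\tfrac13,\tfrac13,\tfrac16$, and these pairs are independent across voters. This per-coordinate law is log-\emph{submodular} (negatively correlated), which is the wrong sign for FKG; but the change of variables $u_i=x_i$, $v_i=1-y_i$ produces the joint distribution $\tfrac13,\tfrac16,\tfrac16,\tfrac13$ on $(\{0,1\}^2)$, which satisfies $\Pr[(0,0)]\Pr[(1,1)] \ge \Pr[(0,1)]\Pr[(1,0)]$ and is therefore log-supermodular on that lattice. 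The product measure on the product lattice $(\{0,1\}^2)^n$ is then log-supermodular, and the marginals of $u$ and $v$ are each uniform on $\{0,1\}^n$.

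Now, as a function on $(\{0,1\}^2)^n$, the map $(u,v)\mapsto f(u)$ is monotone increasing (since $f$ is monotone increasing on $\{0,1\}^n$), while $(u,v)\mapsto g(1-v)=g(y)$ is monotone \emph{decreasing} (because $g$ is monotone increasing in $1-v$). The standard FKG corollary for a log-supermodular measure (increasing times decreasing is negatively correlated) yields
\[
\mathbb{E}[f(x)g(y)] = \mathbb{E}[f(u)\,g(1-v)] \le \mathbb{E}[f(u)]\cdot\mathbb{E}[g(1-v)] = \tfrac12\cdot\tfrac12 = \tfrac14.
\]
The same argument applied to $(y,z)$ and to $(x,z)$ delivers $\mathbb{E}[gh]\le 1/4$ and $\mathbb{E}[fh]\le 1/4$, and summing gives $\Pr[\text{rational}] \ge 3/2 - 3/4 = 3/4$.

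The step I expect to require the most care is the bookkeeping around the change of variables: one must check that flipping $y$ correctly converts the per-coordinate negative correlation of $(x_i,y_i)$ into a log-supermodular joint law of $(u_i,v_i)$, and that reversing the argument of $g$ flips its monotonicity in the right direction so that FKG produces an \emph{upper} bound rather than a lower bound. Once these conventions are pinned down, each pairwise estimate is immediate and the theorem follows by summing the three bounds.
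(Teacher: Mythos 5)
Your proof is correct, and it reaches the theorem by a genuinely different and more elementary route than the paper. The paper first derives a Fourier-theoretic formula for the probability of irrational choice (Theorem~\ref{Thm:Formula}), reducing the claim to the inequality $\langle\langle f,g\rangle\rangle_{-1/3}\le 0$ for monotone increasing Boolean functions (Proposition~\ref{Prop:Biased-Product}); that inequality is proved by writing $\langle\langle f,g\rangle\rangle_{\delta}=\mathbb{E}_{\mu}[T_{\delta}f\cdot g]-\mathbb{E}_{\mu}[T_{\delta}f]\,\mathbb{E}_{\mu}[g]$, checking that $T_{\delta}f$ is monotone decreasing for $\delta\le 0$, and applying FKG for the uniform measure. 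You bypass the Fourier expansion and the noise operator entirely: your inclusion-exclusion identity $\Pr[\mathrm{irrational}]=-1/2+\mathbb{E}[fg]+\mathbb{E}[gh]+\mathbb{E}[hf]$ is precisely the balanced case of the paper's formula (indeed $\mathbb{E}[f(x)g(y)]=1/4+\langle\langle f,g\rangle\rangle_{-1/3}$, since $\mathbb{E}[r_i(x)r_i(y)]=-1/3$ under the uniform distribution on orderings), and your pairwise bound $\mathbb{E}[f(x)g(y)]\le 1/4$ is exactly Proposition~\ref{Prop:Biased-Product} at $\delta=-1/3$, obtained instead by applying FKG for a log-supermodular product measure after flipping $y$. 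The two FKG applications carry the same content --- the correlated pair $(x,y)$ is exactly the pair $(y\oplus w,y)$ implicit in $T_{-1/3}$ --- but yours requires the general lattice form of FKG rather than only the uniform-measure special case quoted in the paper, and all your verifications (the per-coordinate law $(1/6,1/3,1/3,1/6)$, log-supermodularity after the flip, the monotonicity directions, the uniform marginals) check out. What the paper's route buys is generality: Theorem~\ref{Thm:Monotone'} gives $W\le p_1p_2p_3+(1-p_1)(1-p_2)(1-p_3)$ for monotone choice functions that need not be balanced, and for all even product distributions with $\alpha,\beta,\gamma\le 1/4$; your argument as written is specialized to the uniform, balanced case, though it would extend with the same change-of-variables check.
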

The proof of this result relies on properties of the
Bonamie-Beckner noise operator and uses the FKG inequality~\cite{FKG}.
Furthermore, we establish a generalization of
Theorem~\ref{Thm:Monotone} to even product distributions of the
individual preferences.

\medskip

Finally, we consider the {\it stability version} of Arrow's
theorem presented in~\cite{Kalai-Choice}. This version asserts
that if a balanced GSWF on three alternatives satisfies the IIA
condition and is at least $\epsilon$-far from being a
dictatorship, then it leads to irrational choice with probability
at least $C \cdot \epsilon$, for a universal constant $C$. Kalai
asked whether his proof technique can be extended to an analytic
proof of Arrow's theorem without the neutrality assumption, or
even to a stability version of Arrow's theorem. (Such version
would assert that for any $\epsilon>0$, there exists
$\delta=\delta(\epsilon)$ such that if a GSWF on at least three
candidates satisfies the IIA condition and is at least
$\delta$-far from being a dictatorship and from not having all the
orderings of the alternatives in its range, then the probability
of irrational choice is at least $\epsilon$.)

We show that the neutrality assumption cannot be dropped completely
from Kalai's result, that is, there does not exist a stability version
of Arrow's theorem (with no additional assumptions) in which the
dependence of $\delta(\epsilon)$ on $\epsilon$ is linear.
\begin{theorem}
For all $\epsilon,K > 0$ and $n=n(\epsilon,K)$ big enough, there
exists a GSWF on three alternatives satisfying the IIA condition,
such that:
\begin{enumerate}
\item Amongst any pair of alternatives, the probability of each
alternative to be preferred by the society over the other
alternative is at least $\eta=2^{-\epsilon n}/(n+1)$.

\item The probability of an irrational choice is less than $\eta/K$.
\end{enumerate}
\label{Thm:Instability}
\end{theorem}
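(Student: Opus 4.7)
The plan is to exhibit an explicit GSWF with the desired properties. The key idea is to use a \emph{balanced} choice function for one pair together with two heavily biased choice functions for the other pairs, so that each cyclic outcome requires combining a rare marginal event with a further conditional large deviation. Concretely, take $m$ to be an odd integer with $\eta \leq 2^{-m} \leq 4\eta$ (so $m \sim \eps n + \log_2(n+1)$) and
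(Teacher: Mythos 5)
Your proposal stops essentially at the point where the actual work begins: you fix $m$ with $2^{-m}\approx\eta$ and announce that one choice function will be balanced and the other two heavily biased, but the three choice functions are never defined, and neither of the two assertions of the theorem is verified for any concrete construction. The announced plan does resemble the paper's example (there $f$ is the threshold function $\indicator\{\sum_i x_i\geq (1-q)n\}$ with $\mathbb{E}[f]\approx 2^{n(H(q)-1)}$, $g$ is its dual, and $h$ is majority, with $\epsilon=1-H(q)$), so the route is plausible; but as written it is only a plan. In particular, condition~1 requires checking that all three marginals are at least $\eta=2^{-\epsilon n}/(n+1)$, which for the paper's construction is exactly the binomial/entropy estimate $\binom{n}{qn}\geq 2^{nH(q)}/(n+1)$; note that the most naive biased choice (e.g.\ the AND of all $n$ voters, expectation $2^{-n}$) violates condition~1 whenever $\epsilon<1$, so the calibration of $m$ (or of the threshold level) is not a cosmetic step and must be checked.

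The more serious omission is condition~2. The inputs $x,y,z$ of $f,g,h$ are correlated through the profile distribution (each voter's triple avoids $(0,0,0)$ and $(1,1,1)$), so the probability of a cyclic outcome is not a product of marginals, and the whole point of the theorem is a quantitative gap: one must show that, conditioned on the rare event that the biased function lands on its unlikely side, the balanced function agrees with probability exponentially small in $n$, so that $W(f,g,h)\ll\eta$. Concretely this is a large-deviation estimate for majority under the tilted measure in which the relevant coordinates of $z$ become Bernoulli$(1/3)$ — in the paper this is carried out via the noise-operator identity $\langle\langle f,h\rangle\rangle_{-1/3}=\langle T_{1/3}f'',h\rangle-p_1p_3$ and the bound $\langle T_{1/3}f'',h\rangle\leq\frac12(2/9)^{n/2}$, leading to $W(f,g,h)<2^{n(q-1.08)}$ compared with $\eta\approx 2^{n(H(q)-1)}/(n+1)$, and the inequality $q-1.08<H(q)-1$ is what makes $W<\eta/K$ for large $n$. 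Your phrase ``a further conditional large deviation'' points at exactly this step, but no such estimate (nor any substitute, e.g.\ a Hoeffding bound for the shifted mean $n/2-m/6$) is carried out, so the proof is incomplete at its crux.
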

The example that proves Theorem~\ref{Thm:Instability} is a GSWF on
three alternatives in which the choice functions $f,g,$ and $h$
are threshold functions (i.e., $(f(x)=1) \Leftrightarrow \left(
\sum_{i=1}^n x_i \geq k \right)$ ), with expectations $\eta,1/2,$
and $1-\eta$.

\medskip

After this paper was written, a stability version of Arrow's theorem
without additional assumptions was proved by Mossel~\cite{Mossel-Arrow}.
In Mossel's theorem, the dependence of $\delta$ on $\epsilon$ is
$\delta=m^2 \cdot \exp(C/\epsilon^{21})$ for a universal constant $C$, where
$m$ is the number of alternatives. Recently,
Keller~\cite{Keller-new} showed that the stability version holds for
$\delta \approx Cm^2 \cdot \epsilon^{8/9}$ where $C$ is a universal constant.
Moreover, Keller showed that for small values of $\epsilon$, the example
presented above (i.e., the threshold functions) is almost optimal: its
probability of irrational choice is greater than the lower bound at
most by a logarithmic factor (in $\epsilon$).

\medskip

The paper is organized as follows: In Section~\ref{Sec:Fourier} we
recall some basic properties of the Fourier-Walsh expansion of
functions on the discrete cube and of the Bonamie-Beckner
noise operator. In Section~\ref{Sec:Generalization} we generalize the results
of~\cite{Kalai-Choice} to even product distributions of the
preferences and prove Theorem~\ref{Thm:Bounded-Away}. In
Section~\ref{Sec:Lower-Bounds} we establish lower bounds on the
probability of a rational choice for several classes of GSWFs and
prove Theorem~\ref{Thm:Monotone}. In
Section~\ref{Sec:Upper-Bounds} we discuss Kalai's stability version
of Arrow's theorem and prove Theorem~\ref{Thm:Instability}.

\section{Preliminaries} \label{Sec:Fourier}

\subsection{Fourier-Walsh Expansion of Functions on the Discrete Cube}

Consider the discrete cube $\{0,1\}^n$ endowed with the uniform measure
$\mu$. Denote the set of all real-valued functions on the discrete cube
by $X$. The inner product of functions $f,g \in X$ is defined as usual as
\[
\langle f,g \rangle = \mathbb{E}_{\mu}[fg]= \frac{1}{2^n} \sum_{x \in
\{0,1\}^n} f(x)g(x).
\]
This inner product induces a norm on $X$:
\[
||f||_2 = \sqrt{\langle f,f \rangle} = \sqrt{\mathbb{E}_{\mu}[f^2]}.
\]
Consider the Rademacher functions $\{r_i\}_{i=1}^n$, defined as:
\[
r_i(x_1,\ldots,x_n)=2x_i-1.
\]
These functions constitute an orthonormal system in $X$. Moreover, this
system can be completed to an orthonormal basis in $X$ by defining
\[
r_S = \prod_{i \in S} r_i,
\]
for all $S \subset \{1,\ldots,n\}$. Every function $f \in X$ can be
represented by its Fourier expansion with respect to the system
$\{r_S\}_{S \subset \{1,\ldots,n\}}$:
\[
f = \sum_{S \subset \{1,\ldots,n\}} \langle f,r_S \rangle r_S.
\]
This representation is called the Fourier-Walsh expansion of $f$. The
coefficients in this expansion are denoted by
\[
\hat f(S) = \langle f,r_S \rangle,
\]
and the {\it level} of the coefficient $\hat f(S)$ is $|S|$.

\medskip

\noindent By the Parseval identity, for all $f \in X$,
\[
\sum_{S \subset \{1,\ldots,n\}} \hat f(S)^2 = ||f||_2^2.
\]
More generally, for all $f,g \in X$,
\[
\langle f,g \rangle = \sum_{S \subset \{1,\ldots,n\}} \hat f(S)
\hat g(S).
\]
Following~\cite{Kalai-Choice}, we will be also interested in a biased
version of the inner product, defined as follows:
\begin{definition}
Let $f,g$ be two real-valued functions on the discrete cube, and let $-1
\leq \delta \leq 1$. Define
\[
\langle\langle f,g \rangle\rangle_{\delta} = \sum_{S \neq
\emptyset} \hat f(S) \hat g(S) \delta^{|S|}.
\]
\end{definition}
\noindent Note that this definition slightly differs from the
definition used in~\cite{Kalai-Choice}. Finally, we note that for
all $f \in X$,
\[
\hat f(\emptyset) = \langle f, r_{\emptyset} \rangle = \mathbb{E}_{\mu} [f \cdot 1]
= \mathbb{E}_{\mu}[f].
\]

\subsection{The Bonamie-Beckner Noise Operator}
\label{sec:sub:noise}

The {\it noise operator}, introduced in~\cite{Bonamie,Beckner}, is defined in terms
of the Fourier-Walsh expansion as follows:
\begin{definition}
Consider a function $f$ on the discrete cube with a Fourier-Walsh
expansion $f=\sum_S \hat f(S) r_S$. For $0 \leq \epsilon \leq
1$, the noise operator $T_{\epsilon}$ applied to $f$ is
\begin{equation}
T_{\epsilon} f = \sum_S \epsilon^{|S|} \hat f(S) r_S.
\label{Eq:Beckner0}
\end{equation}
\end{definition}
\noindent It is well-known that one can arrive from $f$ to $T_{\epsilon} f$
by the following process: For any $x \in \{0,1\}^n$,
\begin{equation}
T_{\epsilon} f(x) = \mathbb{E} [f(x \oplus y)],
\label{Eq:Beckner1}
\end{equation}
where $\oplus$ denotes coordinate-wise addition modulo $2$, and each
coordinate of $y$ is chosen independently according to the
distribution $\Pr[y_i=0]=(1+\epsilon)/2, \Pr[y_i=1]=(1-\epsilon)/2$.
That is, each coordinate of $x$ is left unchanged with probability
$\epsilon$ and is replaced by a random value with probability $1-\epsilon$,
and then $f$ is evaluated on the result. Thus, $T_{\epsilon} f$ represents
a noisy variant of $f$, and for this reason $T_{\epsilon} f$ is
called ``the noise operator''.

\medskip

\noindent As pointed out by the anonymous
referee, the noise operator can be defined in the same way
(i.e., by Equation~\ref{Eq:Beckner0}) also for $-1 \leq \epsilon \leq 0$.
Moreover, it can be easily shown that the basic property of the noise
operator described above (i.e., Equation~\ref{Eq:Beckner1}) also translates
to the case $-1 \leq \epsilon \leq 0$. That is, we still have
\[
T_{\epsilon} f(x) = \mathbb{E} [f(x \oplus y)],
\]
where each coordinate of $y$ is chosen independently according to
the distribution $\Pr[y_i=0]=(1+\epsilon)/2,
\Pr[y_i=1]=(1-\epsilon)/2$. Using this observation, we shall
consider the noise operator for $-1 \leq \epsilon \leq 1$.

\section{The Probability of Rational Choice for a Non-Uniform Distribution
of the Preferences} \label{Sec:Generalization}

Throughout the paper we assume that the number of alternatives is
three and denote the alternatives by $A,B,$ and $C$. Since (by
assumption) the GSWF satisfies the IIA condition, the preference
of the society between every pair of alternatives can be
represented by a Boolean function on the discrete cube. Formally,
given a profile, we consider the pair of alternatives $(A,B)$ and
construct a binary vector $(x_1,\ldots,x_n)$ such that $x_i=1$ if
the $i$-th voter prefers $A$ over $B$, and $x_i=0$ if the $i$-th
voter prefers $B$ over $A$. We set $f(x_1,\ldots,x_n)=1$ if the
entire society prefers $A$ over $B$ and $f(x_1,\ldots,x_n)=0$ if
the society prefers $B$ over $A$. Note that the preference of the
society between $A$ and $B$ is determined by $(x_1,\ldots,x_n)$,
and hence $f$ is well-defined. Similarly, we define the Boolean
functions $g$ and $h$ that represent the preferences between the
pairs $(B,C)$ and $(C,A)$, respectively (see Figure~\ref{fig:Tri}).

Every profile is uniquely represented by the binary vector
$(x_1,\ldots,x_n,y_1,\ldots,y_n,z_1,\ldots,z_n)$, where
$(x_i,y_i,z_i)$ represent the preferences of the $i$-th voter
between $(A,B),(B,C)$, and $(C,A)$. We assume that the vectors
$(x_i,y_i,z_i)$ for different values of $i$ are independent (i.e.,
the preferences of the individual voters are independent), and
that these vectors do not assume the values $(0,0,0)$ and
$(1,1,1)$ (since otherwise the preferences of the $i$-th voter do
not constitute an order relation). In~\cite{Kalai-Choice}, the
distribution over the six possible values of $(x_i,y_i,z_i)$ was
assumed to be uniform. In our analysis, we consider the following
distribution:
\[
\Pr[(x_i,y_i,z_i)=(1,1,0)]=\alpha \qquad
\Pr[(x_i,y_i,z_i)=(0,1,1)]=\beta \qquad
\Pr[(x_i,y_i,z_i)=(1,0,1)]=\gamma
\]
\[
\Pr[(x_i,y_i,z_i)=(0,0,1)]=\alpha \qquad
\Pr[(x_i,y_i,z_i)=(1,0,0)]=\beta \qquad
\Pr[(x_i,y_i,z_i)=(0,1,0)]=\gamma,
\]
where $\alpha+\beta+\gamma=1/2$. We call this distribution an {\it
even product distribution}, and denote it by
$D(\alpha,\beta,\gamma)$. The intuition behind the restrictions
will be explained at the end of this section.

\begin{theorem}
Consider a GSWF on three alternatives satisfying the IIA condition
where the choice functions between the pairs of alternatives
$(A,B),(B,C),$ and $(C,A)$ are $f,g,$ and $h$, respectively. If
the distribution of the individual preferences is an even product
distribution $D(\alpha,\beta,\gamma)$, as described above, then the
probability of irrational choice is given by the formula:
\begin{equation}
W(f,g,h)=p_1 p_2 p_3 + (1-p_1)(1-p_2)(1-p_3) + \langle\langle f,g
\rangle\rangle_{4 \alpha -1} + \langle\langle g,h
\rangle\rangle_{4 \beta -1} + \langle\langle h,f \rangle\rangle_{4
\gamma -1},
\label{Eq3.0}
\end{equation}
where $p_1,p_2,$ and $p_3$ are the expectations of $f,g,$ and $h$,
respectively.
\label{Thm:Formula}
\end{theorem}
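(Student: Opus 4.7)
The plan is to express the probability of an irrational outcome as an expectation of a symmetric polynomial in $f,g,h$, and then reduce the resulting pairwise terms $\mathbb{E}[fg]$, $\mathbb{E}[gh]$, $\mathbb{E}[hf]$ to biased inner products via a Fourier--Walsh orthogonality calculation. The starting point is the observation that the society's choice is irrational precisely when the triple $(f,g,h)$ encodes one of the two Condorcet cycles, namely $(1,1,1)$ (for $A\!>\!B$, $B\!>\!C$, $C\!>\!A$) or $(0,0,0)$ (for the reverse cycle). Hence
\[
W(f,g,h) = \mathbb{E}[fgh] + \mathbb{E}[(1-f)(1-g)(1-h)].
\]
Writing $u=2f-1$, $v=2g-1$, $w=2h-1$, the identity $(1+u)(1+v)(1+w)+(1-u)(1-v)(1-w)=2+2(uv+vw+wu)$ causes the cubic and linear cross-terms to cancel, and expanding back in terms of $f,g,h$ yields
\[
W(f,g,h) = 1-(p_1+p_2+p_3) + \mathbb{E}[fg] + \mathbb{E}[gh] + \mathbb{E}[hf].
\]

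The main step is the evaluation of $\mathbb{E}[fg]$ (the other two pairs are handled symmetrically). Expanding $f=\sum_S \hat{f}(S) r_S(x)$ and $g=\sum_T \hat{g}(T) r_T(y)$ in the Fourier--Walsh basis and using that the triples $(x_i,y_i,z_i)$ are independent across voters gives
\[
\mathbb{E}[r_S(x)\, r_T(y)] = \prod_{i=1}^n \mathbb{E}\!\left[(2x_i-1)^{\mathbf{1}_{i\in S}}(2y_i-1)^{\mathbf{1}_{i\in T}}\right].
\]
The evenness condition on $D(\alpha,\beta,\gamma)$ forces each coordinate $x_i$ (and $y_i$) to be marginally uniform on $\{0,1\}$, so any voter lying in the symmetric difference $S\triangle T$ contributes a factor of $0$; only the diagonal terms $S=T$ survive. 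For $i\in S=T$, a direct count from the definition of $D(\alpha,\beta,\gamma)$ gives $\Pr[x_i=y_i]=2\alpha$, hence $\mathbb{E}[(2x_i-1)(2y_i-1)]=4\alpha-1$, whence $\mathbb{E}[r_S(x)r_S(y)]=(4\alpha-1)^{|S|}$. Separating the $S=\emptyset$ term yields
\[
\mathbb{E}[fg] = p_1 p_2 + \langle\langle f,g\rangle\rangle_{4\alpha-1},
\]
and the same calculation applied to the pairs $(g,h)$ and $(h,f)$ delivers the analogous identities with parameters $4\beta-1$ and $4\gamma-1$.

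Substituting the three identities into the expression from the first step and invoking the elementary identity $p_1 p_2 p_3+(1-p_1)(1-p_2)(1-p_3) = 1-(p_1+p_2+p_3)+(p_1 p_2+p_2 p_3+p_3 p_1)$ then recovers (\ref{Eq3.0}). The only place where the hypotheses on $D(\alpha,\beta,\gamma)$ are genuinely used is in the Fourier step: evenness simultaneously yields the marginal uniformity needed to kill the off-diagonal cross-terms and pins down each pairwise correlation as the affine function of $\alpha,\beta,\gamma$ that produces the noise parameters $4\alpha-1,\,4\beta-1,\,4\gamma-1$. There is no serious obstacle beyond this; the only bookkeeping subtlety is matching each parameter to the correct pair of alternatives, which is forced by which two of the coordinates of $(x_i,y_i,z_i)$ are being correlated.
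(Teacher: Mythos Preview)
Your argument is correct. The key difference from the paper's proof is that you perform the algebraic reduction
\[
fgh + (1-f)(1-g)(1-h) = 1 - f - g - h + fg + gh + hf
\]
at the outset, which eliminates the triple product immediately and reduces the problem to the three pairwise expectations $\mathbb{E}_D[fg]$, $\mathbb{E}_D[gh]$, $\mathbb{E}_D[hf]$ under the actual distribution $D(\alpha,\beta,\gamma)$. You then evaluate each of these by expanding $f,g$ pointwise in the Walsh basis and using that, under an even product distribution, each marginal $x_i,y_i,z_i$ is uniform (killing the $S\neq T$ terms) while the pairwise correlation $\mathbb{E}[(2x_i-1)(2y_i-1)]=4\alpha-1$ is read off directly from $\Pr[x_i=y_i]=2\alpha$.

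The paper instead lifts everything to the uniform measure on $\{0,1\}^{3n}$: it encodes the distribution itself as a function $F_3(x,y,z)=\Pr[(x,y,z)]$, writes $W=2^{3n}\langle F_3,F_1+F_2\rangle$ with $F_1=fgh$ and $F_2=(1-f)(1-g)(1-h)$, and applies Parseval on the $3n$-cube. The cancellation you obtain algebraically appears there as the Fourier-side observation $\hat F_1(S)+\hat F_2(S)=0$ whenever $S_1,S_2,S_3$ are all nonempty, and the evenness hypothesis appears as the vanishing of the singleton coefficients $\hat F_4^i(\{j\})$. Your route is shorter and more transparent about where evenness is used; the paper's route is more systematic in that it displays the full Fourier transform of the preference distribution and makes explicit exactly which of its coefficients must vanish for the formula to take this simple form.
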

\begin{remark} Theorem~\ref{Thm:Formula} generalizes Theorem~3.1
of~\cite{Kalai-Choice}, which corresponds to the case
$\alpha=\beta=\gamma=1/6$.
\end{remark}

\begin{proof}
For a profile
$(x,y,z)=(x_1,\ldots,x_n,y_1,\ldots,y_n,z_1,\ldots,z_n)$, the
choice of the society is rational if and only if
\[
f(x)g(y)h(z)+(1-f(x))(1-g(y))(1-h(z))=0.
\]
Therefore, the probability of irrational choice is
\[
W(f,g,h)=\sum_{(x,y,z) \in \{0,1\}^{3n}} \Pr[(x,y,z)]
\Big(f(x)g(y)h(z)+(1-f(x))(1-g(y))(1-h(z))\Big),
\]
where $\Pr[(x,y,z)]=\prod_i \Pr[(x_i,y_i,z_i)]$, according to the
distribution $D(\alpha,\beta,\gamma)$.

\medskip

\noindent Consider the functions $F_1,F_2,F_3:\{0,1\}^{3n} \rightarrow
\mathbb{R}$ defined by
\[
F_1(x_1,\ldots,x_n,y_1,\ldots,y_n,z_1,\ldots,z_n)=f(x)g(y)h(z),
\]
\[
F_2(x_1,\ldots,x_n,y_1,\ldots,y_n,z_1,\ldots,z_n)=(1-f(x))(1-g(y))(1-h(z)),
\]
\[
F_3(x_1,\ldots,x_n,y_1,\ldots,y_n,z_1,\ldots,z_n)=\Pr[(x,y,z)].
\]
We have
\[
W(f,g,h)=2^{3n} \langle F_3,F_1+F_2 \rangle,
\]
and hence by the Parseval identity,
\begin{equation}
W(f,g,h) = 2^{3n} \sum_{S \subset \{1,\ldots,3n\}} \hat F_3(S)
(\hat F_1(S) + \hat F_2(S)).
\label{Eq3.1}
\end{equation}
Therefore, in order to compute the probability of rational choice
it is sufficient to compute the Fourier-Walsh expansions of
$F_1,F_2,$ and $F_3$.

In order to compute the expansions, we use the fact that if a
function is a multiplication of functions on disjoint sets of
variables, then its Fourier-Walsh expansion also has the same
structure. Hence, if we denote $S=(S_1,S_2,S_3)$, where $S_1$
represents $(x_1,\ldots,x_n)$, $S_2$ represents
$(y_1,\ldots,y_n)$, and $S_3$ represents $(z_1,\ldots,z_n)$, then
\[
\hat F_1(S) = \hat f(S_1) \hat g(S_2) \hat h(S_3) \qquad
\mbox{ and } \qquad
\hat F_2(S) = \widehat{1-f}(S_1) \widehat{1-g}(S_2) \widehat{1-h}(S_3).
\]
Similarly, since the individual preferences are independent, the
Fourier-Walsh expansion of $F_3$ is determined by the
Fourier-Walsh expansion of the functions $F_4^i: \{0,1\}^3
\rightarrow \mathbb{R}$ defined by
\[
F_4^i((x_i,y_i,z_i)) = \Pr[(x_i,y_i,z_i)].
\]
This expansion (presented below) can be found by direct
computation.
\[
\hat F_4^i(\emptyset)=1/8, \qquad \hat F_4^i(\{1\})=0, \qquad \hat
F_4^i(\{2\})=0, \qquad \hat F_4^i(\{3\})=0, \qquad \hat
F_4^i(\{1,2\})=(4 \alpha -1)/8,
\]
\[
\hat F_4^i(\{2,3\})=(4 \beta -1)/8, \qquad \hat F_4^i(\{1,3\})=(4
\gamma -1)/8, \qquad \hat F_4^i(\{1,2,3\})=0.
\]
Since the Fourier-Walsh coefficients of $F_3$ are multiplications
of the corresponding coefficients of $\{F_4^i\}_{i=1}^n$, we have
$\hat F_3(S)=0$, unless $S=(S_1,S_2,S_3)$ has a special structure:
Each $1 \leq i \leq n$ is contained in either none or two of the
sets $(S_1,S_2,S_3)$. For such special sets $S$, the coefficients
are given by the formula
\[
\hat F_3(S) = \Big(\frac{1}{8}\Big)^{t_1} \Big(\frac{4 \alpha -1}{8}\Big)^{t_2}
\Big(\frac{4 \beta -1}{8}\Big)^{t_3} \Big(\frac{4 \gamma -1}{8}\Big)^{t_4},
\]
where
\[
t_1= \mbox{ the number of triples } (x_i,y_i,z_i) \mbox{ equal to
} (0,0,0),
\]
\[
t_2= \mbox{ the number of triples } (x_i,y_i,z_i) \mbox{ equal to
} (1,1,0),
\]
\[
t_3= \mbox{ the number of triples } (x_i,y_i,z_i) \mbox{ equal to
} (0,1,1),
\]
\[
t_4= \mbox{ the number of triples } (x_i,y_i,z_i) \mbox{ equal to
} (1,0,1).
\]

Finally, we note that by the linearity of the Fourier transform,
we have $\hat f(S_1) = -(\widehat{1-f}(S_1))$ for all $S_1 \neq \emptyset$,
and the same for $g$ and $h$. Therefore, if $S_1,S_2,S_3 \neq \emptyset$, then
\[
\hat F_1(S) + \hat F_2(S) = 0.
\]
Combining the observations above, we get that the term
\[
\hat F_3(S) (\hat F_1(S) + \hat F_2(S))
\]
vanishes unless $S=(S_1,S_2,S_3)$ has the following special
structure: At least one of $S_1,S_2,S_3$ is empty, and each $i$ is
contained in either none or two of $S_1,S_2,S_3$.

Assume that $S_3=\emptyset$, and thus $S_1=S_2$ (otherwise, there exists
$i$ that is contained in only one of the sets $S_1,S_2,S_3$, and hence $\hat
F_3(S) (\hat F_1(S) + \hat F_2(S))=0$). Assume also that $S_1 \neq
\emptyset$. We note that $\widehat{1-h}(\emptyset)=1- \hat
h({\emptyset})$, and hence by the calculations above,
\[
\hat F_3(S) (\hat F_1(S) + \hat F_2(S)) = \Big(\frac{1}{8}\Big)^{n-|S_1|}
\Big(\frac{4 \alpha -1}{8}\Big)^{|S_1|} \hat f(S_1) \hat g(S_1) =
\Big(\frac{1}{8}\Big)^n (4 \alpha -1)^{|S_1|} \hat f(S_1) \hat g(S_1).
\]
If $S_1=S_2=S_3=\emptyset$, then
\[
\hat F_3(S) (\hat F_1(S) + \hat F_2(S)) = (1/8)^n (p_1 p_2 p_3 +
(1-p_1)(1-p_2)(1-p_3)).
\]
Therefore, summing over all the possible values of $S$ we get
\[
\sum_{S \subset \{1,\ldots,3n\}} \hat F_3(S) (\hat F_1(S) + \hat
F_2(S)) = (1/8)^n \Big(p_1 p_2 p_3 + (1-p_1)(1-p_2)(1-p_3) +
\]
\[
+ \sum_{S_1 \neq \emptyset} (4 \alpha -1)^{|S_1|} \hat f(S_1) \hat
g(S_1) + \sum_{S_2 \neq \emptyset} (4 \beta -1)^{|S_2|} \hat
g(S_2) \hat h(S_2) + \sum_{S_3 \neq \emptyset} (4 \gamma
-1)^{|S_3|} \hat f(S_3) \hat h(S_3) \Big) =
\]
\[
= (1/8)^n \Big(p_1 p_2 p_3 + (1-p_1)(1-p_2)(1-p_3)+ \langle\langle f,g
\rangle\rangle_{4 \alpha -1} + \langle\langle g,h
\rangle\rangle_{4 \beta -1} + \langle\langle h,f \rangle\rangle_{4
\gamma -1} \Big),
\]
and thus the assertion of the theorem follows from
Equation~(\ref{Eq3.1}).
\end{proof}

Using Theorem~\ref{Thm:Formula}, some of the results
of~\cite{Kalai-Choice} and~\cite{Mossel} can be generalized to even product
distributions of the preferences. We present here two of the
results.

\begin{theorem}
Consider a GSWF on three alternatives satisfying the IIA
condition. If the distribution of the preferences is an even product
distribution $D(\alpha,\beta,\gamma)$ and the GSWF is neutral and
symmetric, then the probability of an irrational choice
satisfies the inequality
\begin{equation}
W(f,g,h) \geq (\frac{1}{4}-d_m)(1+(4 \alpha -1)^3 + (4 \beta -1)^3
+ (4 \gamma -1)^3) > 0,
\label{Eq3.2}
\end{equation}
where $d_m \approx 1/(2\pi)$ is the sum of squares of the first-level
Fourier-Walsh coefficients of the majority function. In particular,
$W(f,g,h)$ is bounded away from zero.
\label{Cor:Neutral-Symmetric}
\end{theorem}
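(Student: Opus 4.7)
The plan is to apply the probability formula of Theorem~\ref{Thm:Formula} and combine it with two auxiliary bounds: an upper bound $W_1(f) \leq d_m$ on the level-$1$ Fourier weight implied by symmetry, and an elementary monotonicity inequality for the power sums $s_k := \delta_\alpha^k + \delta_\beta^k + \delta_\gamma^k$, where $\delta_i := 4\alpha_i - 1 \in (-1,1)$. By neutrality, $f = g = h$ and $\mathbb{E}[f] = 1/2$, so Theorem~\ref{Thm:Formula} collapses to
\[
W(f,f,f) = \frac{1}{4} + \sum_{k \geq 1} W_k(f)\, s_k,
\]
where $W_k(f) := \sum_{|S|=k} \hat f(S)^2$ and Parseval gives $\sum_{k \geq 1} W_k(f) = 1/4$. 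The hypothesis $\alpha + \beta + \gamma = 1/2$ forces $s_1 = -1$, so the level-$1$ contribution is exactly $-W_1(f)$.

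For the symmetry input, a transitive action on the voters makes all coefficients $\hat f(\{i\})$ equal, and the classical comparison with majority (the extremal balanced Boolean function for the first-level Fourier weight under this symmetry) yields $W_1(f) \leq d_m$. The algebraic heart of the argument is the inequality $s_k \geq s_3$ for every $k \geq 2$. For $k = 2$ it is immediate, since $s_2 - s_3 = \sum_i \delta_i^2(1 - \delta_i) \geq 0$ because $|\delta_i| \leq 1$. For $k \geq 3$, write $s_k - s_3 = \sum_i \delta_i^3(\delta_i^{k-3} - 1)$: every summand with $\delta_i \leq 0$ is nonnegative, and since $\sum_i \delta_i = -1$ at most one of the three $\delta_i$ is positive, so at most one summand is negative. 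In the sign-changing case (relabel so that $\delta_\alpha > 0$), the constraint $|\delta_\beta| + |\delta_\gamma| = 1 + \delta_\alpha$ together with elementary calculus on the auxiliary function $g(x) := x^3 - x^k$ over $[0,1]$ shows that the two nonnegative contributions from $\delta_\beta, \delta_\gamma$ always outweigh the single negative contribution from $\delta_\alpha$.

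Assembling these, the monotonicity lemma and $\sum_{k \geq 2} W_k(f) = 1/4 - W_1(f)$ give $\sum_{k \geq 2} W_k(f)\, s_k \geq s_3(1/4 - W_1(f))$, whence
\[
W(f,f,f) \geq \frac{1}{4} - W_1(f) + s_3\!\left(\frac{1}{4} - W_1(f)\right) = \left(\frac{1}{4} - W_1(f)\right)(1 + s_3) \geq \left(\frac{1}{4} - d_m\right)(1 + s_3).
\]
Strict positivity is then clear from the identity $1 + s_3 = 3(\delta_\alpha + 1)(\delta_\beta + 1)(\delta_\gamma + 1) = 192\,\alpha\beta\gamma > 0$, which follows from $\sum_i \delta_i = -1$ via Newton's identity and $\delta_i + 1 = 4\alpha_i$. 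The main obstacle I anticipate is the monotonicity lemma in the sign-changing case: the inequality $s_k \geq s_3$ is no longer termwise there, so one must invoke the global constraint $\sum_i \delta_i = -1$ through a nontrivial single-variable optimization of $g(b) + g(c)$ subject to $b + c = 1 + a$ and $b, c \in [a, 1]$.
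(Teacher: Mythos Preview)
Your plan matches the paper's proof almost exactly: apply Theorem~\ref{Thm:Formula} with $f=g=h$ balanced, bound $W_1(f)\le d_m$ via transitive symmetry, and reduce everything to the power-sum inequality $s_k\ge s_3$, which is precisely the paper's Lemma~\ref{Lemma0.0} (stated there only for odd exponents, since the paper first notes that neutrality forces the even-level Fourier weights of $f$ to vanish, making your $k=2$ and even-$k$ cases unnecessary). Your factorization $1+s_3=3\prod_i(\delta_i+1)=192\,\alpha\beta\gamma$ for strict positivity is cleaner than the paper's convexity argument, but otherwise the two proofs coincide, and the ``main obstacle'' you flag is exactly the content of Lemma~\ref{Lemma0.0}.
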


\begin{remark} Theorem~\ref{Cor:Neutral-Symmetric} generalizes
Theorem~5.1 in~\cite{Kalai-Choice}, which corresponds to the case
$\alpha=\beta=\gamma=1/6$.
\end{remark}

\noindent In the proof of Theorem~\ref{Cor:Neutral-Symmetric} we use the following technical
lemma, obtained with the assistance of Tomer Schlank.
\begin{lemma}\label{Lemma0.0}
For any integer $k \geq 1$, and all $-1 \leq x,y,z \leq 1$ such
that $x+y+z=1$, we have
\begin{equation}\label{Eq0}
x^3+y^3+z^3 \geq x^{2k+1} + y^{2k+1} + z^{2k+1}.
\end{equation}
\end{lemma}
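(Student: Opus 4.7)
The plan is to reduce the three-variable inequality to a one-variable inequality which is then verified by explicit polynomial factorization. The case $k = 1$ is trivial since both sides coincide, so assume $k \geq 2$.

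First, because each $x_i \leq 1$ and $x + y + z = 1$, at most one of $x, y, z$ can be negative (two negatives would force the third to exceed $1$). If all three are nonnegative, then each $x_i \in [0, 1]$ satisfies $x_i^3 \geq x_i^{2k+1}$, and the claim is immediate by summing. In the remaining case, WLOG $z < 0$, and setting $a := -z \in (0, 1]$, one has $x + y = 1 + a$ together with $x, y \in [a, 1]$ (since $x, y \leq 1$). Writing $p(t) := t^3 - t^{2k+1}$ and using that $p$ is odd, the inequality to be proved becomes
\[
p(x) + p(y) \geq p(a) \qquad \text{for } x, y \in [a, 1],\ x + y = 1 + a.
\]

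Parametrizing by $s \in [0, 1-a]$ via $x = a + s$, $y = 1 - s$, define $\Psi(s) := p(a+s) + p(1-s) - p(a)$. Since $p(1) = 0$, one has $\Psi(0) = \Psi(1-a) = 0$, and $\Psi$ is symmetric under $s \leftrightarrow 1-a-s$; note also that the leading $s^{2k+1}$-terms in $p(a+s)$ and $p(1-s)$ cancel, so $\deg \Psi = 2k$. Consequently $\Psi$ is divisible by $s(1-a-s)$, and we may write $\Psi(s) = s(1-a-s)\, R(s)$ for a polynomial $R$ of degree $2k - 2$; the lemma then reduces to showing $R(s) \geq 0$ on $[0, 1-a]$.

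The main obstacle is proving $R \geq 0$ uniformly in $k$. The approach is to compute $R$ explicitly by expanding $\Psi$ and dividing by $s(1-a-s)$. For $k = 2$ this direct computation yields
\[
R(s) = (1+a)\bigl[5s^2 - 5(1-a)s + 2 + 5a^2\bigr],
\]
whose discriminant $-(15 + 50a + 75a^2)$ is negative, so $R > 0$ throughout. For general $k$ I would aim for an analogous closed-form factorization, pulling out the automatic factor $(1+a)$ and then displaying the remaining polynomial in a manifestly nonnegative form (e.g., as a sum of squares in $s$ and $1-a-s$ with nonnegative coefficients). A fallback strategy is to work directly on the three-variable problem via symmetric-function identities: on $\{e_1 = 1\}$ one has $(1-x)(1-y)(1-z) = e_2 - e_3 \geq 0$ on the constraint triangle, and Newton's identities express $p_3 - p_{2k+1}$ as $(e_2 - e_3)$ times a polynomial in $(e_2, e_3)$ whose nonnegativity can be established inductively, using the base case $k=2$ together with the bound $e_2 \leq 1/3$.
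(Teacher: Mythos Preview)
Your reduction is correct and clean: the observation that at most one variable can be negative, the parametrization $x=a+s$, $y=1-s$, and the factorization $\Psi(s)=s(1-a-s)R(s)$ are all valid, and your explicit computation for $k=2$ checks out. The problem is that the proof stops there. For general $k$ you offer two strategies but carry out neither. The phrase ``I would aim for an analogous closed-form factorization'' is not a proof: the polynomial $R$ has degree $2k-2$, and there is no evident reason why it should decompose into a manifestly nonnegative form uniformly in $k$. Your symmetric-function fallback is more promising --- the divisibility $p_3-p_{2k+1}=(e_2-e_3)\,Q_k(e_2,e_3)$ does hold, and for $k=2$ one gets $Q_2=2-5e_2$, which is nonnegative since $e_2\le 1/3$ --- but already for $k=3$ one finds $Q_3=7e_2^2-14e_2+7e_3+4$, and checking $Q_3\ge 0$ requires controlling $e_3$ as well. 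You give no indication of how an induction on $k$ would proceed, nor what invariant would be maintained. As written, the proposal establishes the lemma only for $k\le 2$.

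The paper's proof is entirely different and handles all $k$ at once. It uses Lagrange multipliers: at an interior minimum of $f(x,y,z)=(x^3+y^3+z^3)-(x^{2k+1}+y^{2k+1}+z^{2k+1})$ on $\{x+y+z=1\}\cap[-1,1]^3$, the equality $3x_0^2-(2k+1)x_0^{2k}=3y_0^2-(2k+1)y_0^{2k}=3z_0^2-(2k+1)z_0^{2k}$ forces two of $|x_0|,|y_0|,|z_0|$ to coincide (via monotonicity of $t\mapsto\sum_{l=0}^{k-1}(x_0^2)^l t^{k-1-l}$). After checking the boundary (where $f\equiv 0$), this reduces the problem to the one-variable inequality $2g(1-\delta)\ge g(1-2\delta)$ for $g(t)=t^3-t^{2k+1}$ on $[0,1/2]$, which is then dispatched by locating the unique inflection point and maximum of $g$ on $[0,1]$ and combining concavity with monotonicity. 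The key advantage over your approach is that the convexity structure of $g$ is the same for every $k$, so no $k$-dependent polynomial identity is needed.
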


\begin{proof} Denote $D= \{(x,y,z) \in [-1,1]^3 |
x+y+z=1\}$, and $f(x,y,z)= (x^3+y^3+z^3) - (x^{2k+1} + y^{2k+1} +
z^{2k+1})$. Since $D$ is compact and $f$ is continuous, $f$
obtains a minimum in $D$. We would like to show that $\min_D (f) =
0$. First, we note that $f$ is identically zero on the boundary of
$D$. Indeed, if $(x,y,z) \in \partial(D)$, then w.l.o.g., either
$x=-1$ and then necessarily $y=z=1$, or $x=1$ and then $y=-z$. In
both cases, $f(x,y,z)=0$. If $f$ attains its minimum in an
internal point $(x_0,y_0,z_0) \in D$, then by Lagrange
multipliers, we have
\[
3x_0^2 - (2k+1)x_0^{2k} = 3y_0^2 - (2k+1)y_0^{2k} = 3z_0^2 -
(2k+1)z_0^{2k}.
\]
If $|x_0| \neq |y_0|$, the first equality is equivalent to:
\begin{equation}\label{Eq1}
\frac{3}{2k+1} = \frac{x_0^{2k}-y_0^{2k}}{x_0^2-y_0^2} =
\sum_{l=0}^{k-1} (x_0^2)^l (y_0^2)^{k-1-l},
\end{equation}
and similarly for the pairs $(x_0,z_0)$ and $(y_0,z_0)$. For a
given $x_0$, the function $\sum_{l=0}^{k-1} (x_0^2)^l
(y_0^2)^{k-1-l}$ is increasing as function of $y_0^2$. Hence,
Equation~(\ref{Eq1}) can be satisfied for both $(x_0,y_0)$ and
$(x_0,z_0)$ only if $|y_0|=|z_0|$. Thus, an internal minimum point
of $f$ in $D$ must satisfy at least one of the conditions
$|x_0|=|y_0|$, $|x_0|=|z_0|$ or $|y_0|=|z_0|$. Assume, w.l.o.g.,
that $|x_0|=|y_0|$. If $x_0=-y_0$, then necessarily $z_0=1$, and
thus $(x_0,y_0,z_0) \in \partial (D)$. If $x_0=y_0$, then
$z_0=1-2x_0$, and hence, Inequality~(\ref{Eq0}) is reduced to:
\begin{equation}\label{Eq2}
2x_0^3+(1-2x_0)^3 \geq 2x_0^{2k+1} + (1-2x_0)^{2k+1}.
\end{equation}
Therefore, it is sufficient to prove Inequality~(\ref{Eq2}) for
all $0 \leq x_0 \leq 1$. Note that the inequality holds trivially
for $x_0 \leq 1/2$. Let $g(t)=t^3-t^{2k+1}$, and denote
$\delta=1-x$. By Inequality~(\ref{Eq2}), it is sufficient to prove
that for all $0 \leq \delta \leq 1/2$,
\begin{equation}\label{Eq3}
2g(1-\delta)\geq g(1-2 \delta).
\end{equation}
We use the following two properties of $g(t)$:
\begin{enumerate}
\item $g(t)$ is nonnegative for all $0 \leq t \leq 1$.
Furthermore, $g$ is monotone increasing for $0 <t < t_0$ and
monotone decreasing for $t_0 < t < 1$, where $t_0 =
(\frac{3}{2k+1})^{1/(2k-2)}$.

\item $g(t)$ is convex in the domain $0 <t <t_1$, and concave in
the domain $t_1 < t<1$, where $t_1 =
(\frac{6}{2k(2k+1)})^{1/(2k-2)}$.
\end{enumerate}
Since $g(1)=0$, Inequality~(\ref{Eq3}) follows from the concavity
of $g$ whenever $1-2 \delta \geq t_1$. Furthermore, when $1-
\delta \leq t_0$, the inequality follows immediately from the
monotonicity and nonnegativity of $g$ in that domain. The only
remaining case is when $1-2 \delta < t_1$ and $1- \delta > t_0$
(or equivalently, $(1-t_1)/2 < \delta < 1 -t_0$. We note that this
domain may be empty, and in this case we are already done by the
previous considerations). In this case, by the monotonicity
properties of $g$ we have $g(1-\delta)>g((1+t_1)/2)$ (since $t_0 <
1- \delta < (1+t_1)/2$), and $g(1-2 \delta) < g(t_1)$ (since $1 -2
\delta < t_1 < t_0$). Therefore,
\[
2g(1-\delta) - g(1-2 \delta) > 2g((1+t_1)/2)-g(t_1) \geq 0,
\]
where the last inequality follows from the concavity of $g(t)$ for
$t_1 <t <1$. This completes the proof.
\end{proof}

\medskip

\noindent \textbf{Proof of Theorem~\ref{Cor:Neutral-Symmetric}.}
By assumption, the GSWF is neutral, and hence, balanced.
Therefore, by Theorem~\ref{Thm:Formula}, the probability of
irrational choice in our case is
\[
W(f,g,h)=1/4 + \langle\langle f,g \rangle\rangle_{4 \alpha -1} +
\langle\langle g,h \rangle\rangle_{4 \beta -1} + \langle\langle
h,f \rangle\rangle_{4 \gamma -1}.
\]
Since the GSWF is neutral and symmetric, we have $f=g=h$, and all
the Fourier-Walsh coefficients of $f$ on the even non-zero levels vanish
(see~\cite{Kalai-Choice}, Proof of Theorem~5.1). Thus,
\[
W(f,g,h)=1/4 + \sum_{|S| \mbox{ odd }} \hat f(S)^2 (4
\alpha-1)^{|S|} + \sum_{|S| \mbox{ odd }} \hat f(S)^2 (4
\beta-1)^{|S|} + \sum_{|S| \mbox{ odd }} \hat f(S)^2 (4
\gamma-1)^{|S|} =
\]
\[
= 1/4 + \sum_{k=0}^{\lceil n/2 \rceil -1} \Big[((4 \alpha -1)^{2k+1} +
(4 \beta -1)^{2k+1} + (4 \gamma -1)^{2k+1}) \sum_{|S|=2k+1} \hat
f(S)^2 \Big]=
\]
\[
= 1/4 - \sum_{|S|=1} \hat f(S)^2 + \sum_{k=1}^{\lceil n/2 \rceil
-1} ((4 \alpha -1)^{2k+1} + (4 \beta -1)^{2k+1} + (4 \gamma
-1)^{2k+1}) \sum_{|S|=2k+1} \hat f(S)^2,
\]
where the last equality follows from the relation $\alpha+\beta+\gamma=1/2$.
Since for every $k$ the expression $\sum_{|S|=2k+1} \hat f(S)^2$
is non-negative, and since by Lemma~\ref{Lemma0.0}, for all $k \geq 1$,
\[
(4 \alpha -1)^{2k+1} + (4 \beta -1)^{2k+1} + (4 \gamma -1)^{2k+1}
\geq (4 \alpha -1)^3 + (4 \beta -1)^3 + (4 \gamma -1)^3,
\]
it follows that
\[
W(f,g,h) \geq 1/4 - \sum_{|S|=1} \hat f(S)^2 + \left( (4 \alpha
-1)^{3} + (4 \beta -1)^{3} + (4 \gamma -1)^{3} \right) \sum_{|S|>1} \hat f(S)^2=
\]
\[
= (1/4 - \sum_{|S|=1} \hat f(S)^2) (1 + (4 \alpha
-1)^{3} + (4 \beta -1)^{3} + (4 \gamma -1)^{3}),
\]
where the last equality follows from the Parseval identity. Since amongst the
symmetric neutral functions, the expression $\sum_{|S|=1} \hat f(S)^2$ is maximized
for the majority function (see proof of Theorem~5.1 in~\cite{Kalai-Choice}),
we get
\[
W(f,g,h) \geq (\frac{1}{4}-d_m)(1+(4 \alpha -1)^3 + (4 \beta -1)^3
+ (4 \gamma -1)^3),
\]
and thus it is only left to show that
\begin{equation}
(4 \alpha -1)^{3} + (4 \beta -1)^{3} + (4 \gamma -1)^{3}
> -1.
\label{Eq3.3}
\end{equation}
This claim is trivial for
$\alpha, \beta, \gamma \leq 1/4$, since in that case
\[
(4 \alpha -1)^{3} + (4 \beta -1)^{3} + (4 \gamma -1)^{3}
> (4 \alpha -1) + (4 \beta -1) + (4 \gamma -1) = -1.
\]
Hence, assume that $\gamma> 1/4$, and write $\gamma = 1/2 - \alpha
- \beta$ (and thus $4 \gamma -1 = 1- 4 \alpha - 4 \beta$).
Inequality~(\ref{Eq3.3}) is equivalent to
\[
(1 - 4 \alpha)^{3} + (1-4 \beta)^{3} < 1 + (1 - 4 \alpha - 4
\beta)^{3},
\]
that follows from the strict convexity of the function
$F(t)=t^{3}$ on $[0,1]$. This completes the proof of
Theorem~\ref{Cor:Neutral-Symmetric}. $\square$

\medskip

The second result is a combination of Theorem~\ref{Thm:Formula}
with the following proposition, which is an easy consequence of
the ``Majority is stablest'' theorem~\cite{Mossel}:
\begin{proposition}
Let $0 \leq \rho \leq 1$ and let $\epsilon >0$. There exists
$n_0=n_0(\rho,\epsilon)$ such that for all $n>n_0$, if
$f:\{0,1\}^n \rightarrow [0,1]$ is symmetric and balanced then
\[
\langle\langle f,f \rangle\rangle_{\rho} = \sum_{S \neq \emptyset}
\hat f(S)^2 \rho^{|S|} \leq \frac{1}{2\pi} \arcsin \rho +
\epsilon.
\]
\label{Prop:Maj-Stablest}
\end{proposition}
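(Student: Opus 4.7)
The plan is to deduce the proposition from the low-degree-influence form of the Majority is Stablest (MIS) theorem of Mossel, O'Donnell, and Oleszkiewicz, using the transitivity hypothesis to supply the required influence bound.

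First I would rewrite the target quantity in terms of noise stability. Since $\mathbb{E}[f] = 1/2$, one has
\[
\text{Stab}_\rho(f) := \sum_{S} \hat f(S)^2 \rho^{|S|} = \hat f(\emptyset)^2 + \langle\langle f, f\rangle\rangle_\rho = \frac{1}{4} + \langle\langle f, f\rangle\rangle_\rho,
\]
so the inequality to prove is equivalent to $\text{Stab}_\rho(f) \leq \tfrac{1}{4} + \tfrac{1}{2\pi}\arcsin\rho + \epsilon$. By Sheppard's formula, $\tfrac{1}{4} + \tfrac{1}{2\pi}\arcsin\rho = \Pr[G_1\geq 0,\, G_2\geq 0]$ for a standard bivariate normal pair of correlation $\rho$, which is exactly the Gaussian quantity $\Gamma_\rho(1/2)$ that appears on the right-hand side of MIS at mean $1/2$.

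Next I would invoke the low-degree-influence form of MIS: for every $\rho\in[0,1]$ and $\epsilon>0$ there exist $d=d(\rho,\epsilon)$ and $\tau=\tau(\rho,\epsilon)>0$ such that whenever $f:\{0,1\}^n\to[0,1]$ satisfies $\text{Inf}_i^{\leq d}(f)\leq\tau$ for every $i$, one has $\text{Stab}_\rho(f) \leq \Gamma_\rho(\mathbb{E}[f]) + \epsilon$. It remains to verify the low-degree-influence hypothesis using the symmetry of $f$. Because $f$ is invariant under a transitive group acting on the coordinates, all coordinates play interchangeable roles, so all individual (and in particular low-degree) influences are equal. Hence
\[
\text{Inf}_i^{\leq d}(f) = \frac{1}{n}\sum_{j=1}^{n}\text{Inf}_j^{\leq d}(f) = \frac{1}{n}\sum_{1\leq |S|\leq d}|S|\,\hat f(S)^2 \leq \frac{d\,\|f\|_2^2}{n} \leq \frac{d}{n},
\]
using $f\in[0,1]$ to bound $\|f\|_2^2\leq 1$. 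Choosing $n_0 = \lceil d/\tau\rceil$ guarantees $\text{Inf}_i^{\leq d}(f)\leq\tau$ for every $n>n_0$, so MIS applies and the bound on $\langle\langle f,f\rangle\rangle_\rho$ follows by subtracting $1/4$.

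The only real obstacle is citing the correct version of MIS: the ordinary max-influence version would not suffice, because transitive balanced Boolean functions can have influences that do not tend to zero (the parity function being the canonical example, with $\text{Inf}_i=1$ for every $i$). The low-degree-influence version is tailored precisely for this situation, since $\sum_{|S|\leq d}|S|\hat f(S)^2$ is $O(d)$ uniformly in $n$, while for high-weight offenders such as parity all of the Fourier mass sits on a level $|S|=n>d$ where the factor $\rho^{|S|}$ also kills the contribution.
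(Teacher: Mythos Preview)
Your proposal is correct and matches the paper's approach: the paper simply states the proposition as ``an easy consequence of the `Majority is stablest' theorem'' without supplying any argument, and your write-up fills in exactly the expected details, namely reducing to the noise-stability form and using transitivity to force all low-degree influences to be at most $d/n$. The observation that the low-degree-influence formulation of MIS is needed (rather than the plain max-influence version) is a nice point that the paper leaves implicit.
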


\begin{corollary}
Consider a GSWF on three alternatives, where the distribution of
the preferences is an even product distribution $D(\alpha,\beta,\gamma)$
with $\alpha,\beta, \gamma \leq 1/4$. Then for all $\epsilon>0$ there exists
$n_0=n_0(\epsilon, \alpha, \beta, \gamma)$ such that if the number
of voters is $n>n_0$ and the GSWF is neutral, symmetric, and
satisfies the IIA condition, then the probability of a rational
choice is at most $p+\epsilon$, where $p$ is the probability of a
rational choice for the majority GSWF on $n$ voters and three
alternatives.
\label{Cor:Maj-Stablest}
\end{corollary}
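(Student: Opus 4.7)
The plan is to deduce Corollary~\ref{Cor:Maj-Stablest} by combining Theorem~\ref{Thm:Formula} with Proposition~\ref{Prop:Maj-Stablest} and the classical computation of the asymptotic noise stability of the majority function (Sheppard's formula). Write $\rho_\alpha=4\alpha-1$, $\rho_\beta=4\beta-1$, $\rho_\gamma=4\gamma-1$; the hypothesis $\alpha,\beta,\gamma\le 1/4$ puts all three in $[-1,0]$. By neutrality the three choice functions coincide with a single balanced $f$, so Theorem~\ref{Thm:Formula} specializes to
\[
W(F)=\tfrac{1}{4}+\langle\langle f,f\rangle\rangle_{\rho_\alpha}+\langle\langle f,f\rangle\rangle_{\rho_\beta}+\langle\langle f,f\rangle\rangle_{\rho_\gamma},
\]
and the same formula, with $f$ replaced by the $n$-voter majority $M_n$, describes $W$ for the majority GSWF. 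The corollary's conclusion $1-W(F)\le p+\epsilon$ is therefore equivalent to
\[
\sum_{i\in\{\alpha,\beta,\gamma\}}\bigl(\langle\langle M_n,M_n\rangle\rangle_{\rho_i}-\langle\langle f,f\rangle\rangle_{\rho_i}\bigr)\le\epsilon.
\]

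Next I would remove the sign obstruction caused by $\rho_i\le 0$. Neutrality forces the antisymmetry $f(\bar x)=1-f(x)$ (since swapping two alternatives flips every voter's preference on that pair and reverses the social preference), which kills $\hat f(S)$ for every nonempty $S$ of even size; the same holds for $M_n$. This is the observation already used inside the proof of Theorem~\ref{Cor:Neutral-Symmetric}. Because only odd levels survive in the biased inner product,
\[
\langle\langle f,f\rangle\rangle_{\rho}=-\langle\langle f,f\rangle\rangle_{|\rho|}\qquad\text{(and likewise for }M_n\text{)}
\]
for every $\rho\in[-1,0]$, so each of the three summands in the previous display rewrites as $\langle\langle f,f\rangle\rangle_{|\rho_i|}-\langle\langle M_n,M_n\rangle\rangle_{|\rho_i|}$ with $|\rho_i|\in[0,1]$.

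Finally I would sandwich both quantities around $\tfrac{1}{2\pi}\arcsin|\rho_i|$. Proposition~\ref{Prop:Maj-Stablest}, applied with accuracy $\epsilon/6$ to the symmetric balanced function $f$, yields, for $n$ exceeding some $n_0(\rho_i,\epsilon)$, the upper bound $\langle\langle f,f\rangle\rangle_{|\rho_i|}\le\tfrac{1}{2\pi}\arcsin|\rho_i|+\epsilon/6$. The matching lower bound $\langle\langle M_n,M_n\rangle\rangle_{|\rho_i|}\ge\tfrac{1}{2\pi}\arcsin|\rho_i|-\epsilon/6$ for all sufficiently large $n$ is the classical statement that the noise stability of the $n$-voter majority converges to Sheppard's value $\tfrac{2}{\pi}\arcsin\rho$ (the factor $\tfrac{1}{2\pi}$ rather than $\tfrac{2}{\pi}$ reflects the $\{0,1\}$-valued normalization used throughout the paper). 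Subtracting the two bounds, each summand is at most $\epsilon/3$, and summation over $i\in\{\alpha,\beta,\gamma\}$ delivers the required $\epsilon$. The main obstacle is the asymptotic lower bound on $\langle\langle M_n,M_n\rangle\rangle_\rho$: it is not isolated as a separate lemma in the paper, but it is precisely the Sheppard/CLT computation that makes Proposition~\ref{Prop:Maj-Stablest} tight and, consequently, makes the comparison in the corollary meaningful.
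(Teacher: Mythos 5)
Your proposal is correct and follows essentially the same route as the paper: specialize Theorem~\ref{Thm:Formula} to the neutral case, use the vanishing of even nonzero levels to write each biased inner product as $-\langle\langle f,f\rangle\rangle_{|4\alpha-1|}$ (and similarly for $\beta,\gamma$), bound these above by $\frac{1}{2\pi}\arcsin|4\alpha-1|+\epsilon$ via Proposition~\ref{Prop:Maj-Stablest}, and compare with the limit $\lim_{n\to\infty}\langle\langle F_n,F_n\rangle\rangle_{\rho}=\frac{1}{2\pi}\arcsin\rho$ for majority, which the paper likewise invokes (citing~\cite{Mossel}, Section~4) rather than reproving. Your explicit $\epsilon/6$ bookkeeping is just a slightly more careful write-up of the same final step.
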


\begin{proof}
Similarly to the proof of Theorem~\ref{Cor:Neutral-Symmetric},
if $\alpha, \beta, \gamma \leq 1/4$ then
\[
W(f,g,h)=1/4 + \sum_{|S| \mbox{ odd }} \hat f(S)^2 (4
\alpha-1)^{|S|} + \sum_{|S| \mbox{ odd }} \hat f(S)^2 (4
\beta-1)^{|S|} + \sum_{|S| \mbox{ odd }} \hat f(S)^2 (4
\gamma-1)^{|S|} =
\]
\[
=1/4 - \sum_{|S| \mbox{ odd }} \hat f(S)^2 |4 \alpha-1|^{|S|} -
\sum_{|S| \mbox{ odd }} \hat f(S)^2 |4 \beta-1|^{|S|} - \sum_{|S|
\mbox{ odd }} \hat f(S)^2 |4 \gamma-1|^{|S|} =
\]
\[
= 1/4 - \langle\langle f,f \rangle\rangle_{|4 \alpha -1|} -
\langle\langle f,f \rangle\rangle_{|4 \beta -1|} - \langle\langle
f,f \rangle\rangle_{|4 \gamma -1|}.
\]
Hence, by Proposition~\ref{Prop:Maj-Stablest}, for every
$\epsilon>0$ there exists $n_0=n_0(\epsilon, \alpha, \beta,
\gamma)$ such that for every GSWF on $n>n_0$ voters satisfying the
assumptions of the corollary,
\[
W(f,g,h) \geq 1/4 - \frac{1}{2\pi} \arcsin (|4 \alpha -1|) -
\frac{1}{2\pi} \arcsin (|4 \beta -1|) - \frac{1}{2\pi} \arcsin (|4
\gamma -1|) - \epsilon.
\]
Finally, since for the majority GSWF $F_n$ on $n$ voters we have,
for all $0 \leq \rho \leq 1$,
\[
\lim_{n \rightarrow \infty} \langle\langle F_n,F_n
\rangle\rangle_{\rho} = \frac{1}{2\pi} \arcsin \rho
\]
(see~\cite{Mossel}, Section~4), the assertion of the corollary
follows.
\end{proof}

\begin{remark}
Corollary~\ref{Cor:Maj-Stablest} is proved in~\cite{Mossel} for a
uniform distribution of the preferences, as a corollary of the
``Majority is Stablest'' theorem.
\end{remark}

\begin{remark} Conjecture~8.1 of~\cite{Kalai-Choice} asserts that
for every distribution of the preferences (and even for more than
three alternatives), the probability of a rational choice for
GSWFs that are neutral, symmetric, and satisfy the IIA condition,
is maximized for the majority function. Hence,
Corollary~\ref{Cor:Maj-Stablest} proves {\it in the asymptotic
sense} (i.e., for a sufficiently large $n$) a special case of the
conjecture.
\end{remark}

We conclude this section by explaining the restriction on the
distribution of the individual preferences. The proof of
Theorem~\ref{Thm:Formula} crucially depends on the fact that $\hat
F_4^i(\{j\})$ vanishes for $j=1,2,3$. This condition holds if and
only if the probabilities of the preferences satisfy the following
three equations:
\[
\Pr[1,0,0]+\Pr[1,1,0]+\Pr[1,0,1]-\Pr[0,1,0]-\Pr[0,0,1]-\Pr[0,1,1]=0,
\]
\[
\Pr[0,1,0]+\Pr[1,1,0]+\Pr[0,1,1]-\Pr[1,0,0]-\Pr[0,0,1]-\Pr[1,0,1]=0,
\]
\[
\Pr[0,0,1]+\Pr[1,0,1]+\Pr[0,1,1]-\Pr[1,0,0]-\Pr[0,1,0]-\Pr[1,1,0]=0,
\]
where $\Pr[a,b,c]$ is a shorthand for $\Pr[(x_i,y_i,z_i)=(a,b,c)]$.
Summing the first two equations we get
\[
2\Pr[1,1,0]-2\Pr[0,0,1]=0,
\]
and similarly by summing the two other pairs of equations we get
$\Pr[1,0,1]=\Pr[0,1,0]$ and $\Pr[0,1,1]=\Pr[1,0,0]$. Finally, since
all the probabilities sum up to one, we get
$\Pr[1,0,0]+\Pr[0,1,0]+\Pr[0,0,1]=1/2$, and this completes the
restrictions described above. It is challenging to generalize
Theorem~\ref{Thm:Formula} to more general distributions on the
preferences, but the expression $\sum_{S \subset \{1,\ldots,3n\}}
\hat F_3(S) (\hat F_1(S) + \hat F_2(S))$ seems hard to compute in
the general case.

\section{Lower Bounds on the Probability of Rational Choice}
\label{Sec:Lower-Bounds}

In this section we establish lower bounds on the probability of
a rational choice for two classes of GSWFs: monotone balanced
functions and general balanced functions.

\subsection{Monotone Balanced GSWFs}

\begin{definition}
A function $f:\{0,1\}^n \rightarrow \mathbb{R}$ is monotone increasing if for
all $x=(x_1,\ldots,x_n)$ and $y=(y_1,\ldots,y_n)$,
\[
(\forall i: x_i \leq y_i) \Rightarrow (f(x) \leq f(y)).
\]
Similarly, a function is monotone decreasing if
\[
(\forall i: x_i \leq y_i) \Rightarrow (f(x) \geq f(y)).
\]
\end{definition}

\noindent Theorem~\ref{Thm:Monotone} is a special case of the
following, more general, result:
\begin{theorem}
Consider a GSWF on three alternatives satisfying the IIA condition
where the choice functions between the pairs of alternatives
$(A,B),(B,C),$ and $(C,A)$, denoted by $f,g,$ and $h$,
respectively, are monotone increasing. If the distribution of the preferences
is an even product distribution satisfying $\alpha,
\beta, \gamma \leq 1/4$ (and in particular, if the preferences are uniformly
distributed) then the probability of irrational choice satisfies:
\begin{equation}
W(f,g,h) \leq p_1 p_2 p_3 + (1-p_1)(1-p_2)(1-p_3),
\label{Eq4.0}
\end{equation}
where $p_1,p_2,$ and $p_3$ are the expectations of $f,g,$ and $h$,
respectively.
\label{Thm:Monotone'}
\end{theorem}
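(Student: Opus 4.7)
The plan is to start from the explicit formula proved in Theorem~\ref{Thm:Formula} and show that each of the three biased inner products appearing there is non-positive. By Theorem~\ref{Thm:Formula},
\begin{equation*}
W(f,g,h) = p_1 p_2 p_3 + (1-p_1)(1-p_2)(1-p_3) + \langle\langle f,g \rangle\rangle_{4\alpha -1} + \langle\langle g,h \rangle\rangle_{4\beta -1} + \langle\langle h,f \rangle\rangle_{4\gamma -1}.
\end{equation*}
Since $\alpha,\beta,\gamma \leq 1/4$, each bias $4\alpha -1$, $4\beta -1$, $4\gamma -1$ lies in $[-1,0]$. Thus it is enough to prove the following pointwise statement: if $u,v:\{0,1\}^n \to \{0,1\}$ are monotone increasing and $\delta \in [-1,0]$, then $\langle\langle u,v \rangle\rangle_\delta \leq 0$.

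The main tool is a reflection identity that converts a negative noise parameter into a positive one. For any $v$, let $\tilde v(x) = v(\bar x)$ with $\bar x = (1-x_1,\ldots,1-x_n)$. A short Fourier computation (substitute $u=\bar x$ in $\langle \tilde v, r_S\rangle$) yields $\widehat{\tilde v}(S) = (-1)^{|S|}\hat v(S)$. Consequently, for $\delta \leq 0$ the Fourier coefficients of $T_\delta v$ and $T_{|\delta|}\tilde v$ both equal $\delta^{|S|}\hat v(S)$, so $T_\delta v = T_{|\delta|}\tilde v$. Using self-adjointness of $T_{|\delta|}$, this gives
\begin{equation*}
\langle\langle u,v\rangle\rangle_\delta \;=\; \langle u, T_\delta v\rangle - \mathbb{E}[u]\mathbb{E}[v] \;=\; \langle T_{|\delta|} u,\, \tilde v\rangle - \mathbb{E}[u]\mathbb{E}[v].
\end{equation*}

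Now I would invoke two standard facts. First, $T_\rho$ preserves monotonicity for $\rho \in [0,1]$: decomposing $T_\rho$ coordinate-by-coordinate, on coordinate $i$ it acts as $f \mapsto \tfrac{1+\rho}{2}f(x) + \tfrac{1-\rho}{2}f(x^{\oplus i})$, and the difference $(T_\rho^{(i)}f)(x^{i\to 1}) - (T_\rho^{(i)}f)(x^{i\to 0}) = \rho\bigl(f(x^{i\to 1})-f(x^{i\to 0})\bigr) \geq 0$. Hence $T_{|\delta|}u$ is monotone increasing. Second, $\tilde v$ is monotone decreasing because $v$ is monotone increasing. The FKG (Harris) inequality on the uniform cube, applied to a monotone increasing and a monotone decreasing function, gives
\begin{equation*}
\langle T_{|\delta|}u, \tilde v\rangle \;\leq\; \mathbb{E}[T_{|\delta|}u]\,\mathbb{E}[\tilde v] \;=\; \mathbb{E}[u]\,\mathbb{E}[v],
\end{equation*}
where the last equality uses that the noise operator and the reflection map both preserve expectations. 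Combined with the display above, this yields $\langle\langle u,v\rangle\rangle_\delta \leq 0$. Applying this to each of the three biased inner products and summing gives the bound claimed in Theorem~\ref{Thm:Monotone'}.

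The only nontrivial step is the reflection identity $T_\delta v = T_{|\delta|}\tilde v$ for $\delta \leq 0$: it is what allows us to cast a problem with ``negative correlation'' noise into the positive-noise FKG setting. Once this identity and the monotonicity-preservation of $T_\rho$ for $\rho \geq 0$ are in hand, the rest is a direct application of FKG. Note also that Theorem~\ref{Thm:Monotone} follows immediately by specializing to the uniform distribution and to balanced $f,g,h$, where $p_1=p_2=p_3=1/2$ gives $p_1p_2p_3+(1-p_1)(1-p_2)(1-p_3)=1/4$.
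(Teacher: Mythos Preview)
Your proposal is correct and follows essentially the same approach as the paper: both reduce the theorem via Theorem~\ref{Thm:Formula} to showing $\langle\langle u,v\rangle\rangle_\delta \leq 0$ for monotone $u,v$ and $\delta \leq 0$, rewrite this quantity as $\mathbb{E}[T_\delta u \cdot v]-\mathbb{E}[u]\mathbb{E}[v]$ (or an equivalent form) using the noise operator, and conclude by FKG. The only cosmetic difference is that the paper directly verifies $T_\delta f$ is monotone \emph{decreasing} for $\delta \leq 0$ via the probabilistic description of $T_\delta$ with negative parameter, whereas you arrive at the same FKG setup through the reflection identity $T_\delta v = T_{|\delta|}\tilde v$.
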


\begin{remark} The assertion of Theorem~\ref{Thm:Monotone'} is tight,
as can be seen in the following example: Assume that $f$ depends
only on the first voter, $g$ depends only on the second voter, and
$h$ depends only on the third voter. Then clearly, for all $-1
\leq \delta \leq 1$,
\[
\langle\langle f,g \rangle\rangle_{\delta} = \langle\langle g,h
\rangle\rangle_{\delta} = \langle\langle h,f
\rangle\rangle_{\delta} = 0,
\]
and thus,
\[
W(f,g,h) = p_1 p_2 p_3 + (1-p_1)(1-p_2)(1-p_3),
\]
where $p_1,p_2,$ and $p_3$ are the expectations of $f,g,$ and $h$,
respectively.
\end{remark}

\noindent By Theorem~\ref{Thm:Formula}, the assertion of
Theorem~\ref{Thm:Monotone'} is an immediate consequence of the following
proposition:
\begin{proposition}
For any two monotone increasing Boolean functions $f$ and $g$, and for every
$-1 \leq \delta \leq 1$,
\begin{equation}
\frac{1}{\delta} \langle\langle f,g \rangle\rangle_{\delta} \geq
0.
\label{Eq:Biased-Product1}
\end{equation}
\label{Prop:Biased-Product}
\end{proposition}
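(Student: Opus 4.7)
The plan is to rewrite the biased inner product in terms of the noise operator and then apply the FKG inequality. Using Parseval,
\[
\langle\langle f,g\rangle\rangle_{\delta} = \sum_{S\neq\emptyset}\hat f(S)\hat g(S)\delta^{|S|} = \langle f, T_{\delta}g\rangle - \mathbb{E}[f]\mathbb{E}[g],
\]
so the claim is equivalent to showing that $\mathbb{E}[f\cdot T_{\delta}g] - \mathbb{E}[f]\mathbb{E}[g]$ has the same sign as $\delta$ (or vanishes).

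First I would handle the case $0\le\delta\le 1$. The key sub-claim is that the noise operator preserves monotonicity in this range: if $g$ is monotone increasing, then so is $T_{\delta}g$. This follows from the representation $T_{\delta}g(x)=\mathbb{E}[g(z)]$ where $z$ is obtained from $x$ by independently, at each coordinate, either keeping the bit (with probability $\delta$) or resampling it uniformly (with probability $1-\delta$); given $x\le x'$ coordinatewise, one couples $z$ and $z'$ by using the same keep/resample decisions and the same resampled value at each coordinate, yielding $z\le z'$ and hence $g(z)\le g(z')$. Once $T_{\delta}g$ is known to be monotone increasing, the FKG inequality on the uniform measure of $\{0,1\}^{n}$ gives $\mathbb{E}[f\cdot T_{\delta}g]\ge\mathbb{E}[f]\mathbb{E}[T_{\delta}g]=\mathbb{E}[f]\mathbb{E}[g]$, so $\langle\langle f,g\rangle\rangle_{\delta}\ge 0$, and dividing by $\delta>0$ gives~(\ref{Eq:Biased-Product1}). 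The case $\delta=0$ is covered by continuous extension, or simply by noting that $\langle\langle f,g\rangle\rangle_{\delta}/\delta\to\sum_{|S|=1}\hat f(S)\hat g(S)$, which is nonnegative since the level-one Fourier coefficients of a monotone increasing Boolean function are nonnegative.

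For $-1\le\delta<0$, the idea is to reduce to the previous case by the bit-flip trick. Writing $\delta=-\rho$ with $\rho>0$ and letting $\tilde g(x):=g(\bar x)$ denote the flipped function, one has $\hat{\tilde g}(S)=(-1)^{|S|}\hat g(S)$, and therefore
\[
\widehat{T_{\delta}g}(S)=\delta^{|S|}\hat g(S)=(-1)^{|S|}\rho^{|S|}\hat g(S)=\rho^{|S|}\hat{\tilde g}(S)=\widehat{T_{\rho}\tilde g}(S).
\]
Hence $T_{\delta}g=T_{\rho}\tilde g$. Since $\tilde g$ is monotone decreasing and $\rho\ge 0$, the first case (applied to $\tilde g$, whose monotonicity preservation is symmetric under flipping the inequality) shows that $T_{\delta}g$ is monotone decreasing. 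The FKG inequality applied to the oppositely monotone pair $(f,T_{\delta}g)$ yields $\mathbb{E}[f\cdot T_{\delta}g]\le\mathbb{E}[f]\mathbb{E}[T_{\delta}g]=\mathbb{E}[f]\mathbb{E}[g]$ (the empty-set coefficient is preserved by $T_{\delta}$), so $\langle\langle f,g\rangle\rangle_{\delta}\le 0$; dividing by $\delta<0$ again yields~(\ref{Eq:Biased-Product1}).

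The main obstacle is really just the care needed for the sign; once one notices the identity $T_{\delta}g=T_{|\delta|}\tilde g$ for $\delta<0$, the two regimes collapse into a single FKG application, and there are no estimates to control. The substantive input is the monotonicity-preservation of $T_{\delta}$ for $\delta\ge 0$ via the keep/resample coupling, which is completely standard but worth stating explicitly to make the FKG step legitimate.
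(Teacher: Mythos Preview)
Your proof is correct and follows essentially the same route as the paper: rewrite $\langle\langle f,g\rangle\rangle_{\delta}$ via Parseval as $\mathbb{E}[f\cdot T_{\delta}g]-\mathbb{E}[f]\mathbb{E}[g]$, show that $T_{\delta}$ preserves (resp.\ reverses) monotonicity for $\delta\ge 0$ (resp.\ $\delta<0$), and conclude by FKG. The only cosmetic differences are that the paper applies $T_{\delta}$ to $f$ rather than $g$, and establishes the negative-$\delta$ monotonicity by a direct coordinatewise computation from the probabilistic representation rather than your bit-flip identity $T_{\delta}g=T_{|\delta|}\tilde g$; your explicit handling of $\delta=0$ is in fact slightly more careful than the paper's.
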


\noindent The proof of Proposition~\ref{Prop:Biased-Product} uses
properties of the Bonamie-Beckner noise operator and the FKG correlation
inequality~\cite{FKG}. For the reader's convenience, we recall the statement
of the FKG inequality in the special case
of the uniform measure on the discrete cube.
\begin{theorem}[Fortuin, Kasteleyn, and Ginibre]
Consider the discrete cube $\{0,1\}^n$ endowed with the uniform measure
$\mu$, and let $f,g:\{0,1\}^n \rightarrow \mathbb{R}$. Then:
\begin{enumerate}
\item If both $f$ and $g$ are monotone increasing, then
$\mathbb{E}_{\mu}[fg] \geq \mathbb{E}_{\mu}[f] \mathbb{E}_{\mu}[g]$.

\item If $f$ is monotone increasing and $g$ is monotone
decreasing, then $\mathbb{E}_{\mu}[fg] \leq \mathbb{E}_{\mu}[f]
\mathbb{E}_{\mu}[g]$.
\end{enumerate}
\label{Thm:FKG}
\end{theorem}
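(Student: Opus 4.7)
The plan is to prove Theorem~\ref{Thm:FKG} by induction on $n$, which is the standard route to FKG on the discrete cube with the uniform measure. For the base case $n=1$, both $f$ and $g$ are monotone functions on the two-point set $\{0,1\}$, so $f(1)-f(0)$ and $g(1)-g(0)$ have the same sign and their product is nonnegative. A direct expansion gives the identity
\[
\mathbb{E}_\mu[fg] - \mathbb{E}_\mu[f]\,\mathbb{E}_\mu[g] = \tfrac{1}{4}\bigl(f(1)-f(0)\bigr)\bigl(g(1)-g(0)\bigr),
\]
which immediately yields assertion (1) for $n=1$. Assertion (2) reduces to (1) by replacing $g$ with $-g$, which is monotone increasing exactly when $g$ is monotone decreasing; I will handle this reduction once at the end, so the main work is to prove (1) for all $n$.

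For the inductive step, I would condition on the last coordinate. For $j\in\{0,1\}$, define the restrictions $f_j,g_j:\{0,1\}^{n-1}\to\mathbb{R}$ by $f_j(x_1,\ldots,x_{n-1}) = f(x_1,\ldots,x_{n-1},j)$ and similarly for $g_j$. Monotonicity of $f$ on the full $n$-cube gives two facts: each $f_j$ is monotone increasing on $\{0,1\}^{n-1}$, and $f_0 \leq f_1$ pointwise (and analogously for $g$). Splitting the expectation according to the value of $x_n$ yields
\[
\mathbb{E}_\mu[fg] = \tfrac{1}{2}\bigl(\mathbb{E}_{\mu'}[f_0 g_0] + \mathbb{E}_{\mu'}[f_1 g_1]\bigr),
\]
where $\mu'$ is the uniform measure on $\{0,1\}^{n-1}$. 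Applying the induction hypothesis to the monotone increasing pairs $(f_0,g_0)$ and $(f_1,g_1)$ bounds the right-hand side from below by $\tfrac{1}{2}(a_0 b_0 + a_1 b_1)$, where $a_j=\mathbb{E}_{\mu'}[f_j]$ and $b_j=\mathbb{E}_{\mu'}[g_j]$.

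To close the induction it suffices to show $\tfrac{1}{2}(a_0 b_0 + a_1 b_1)\geq \tfrac{1}{4}(a_0+a_1)(b_0+b_1)=\mathbb{E}_\mu[f]\,\mathbb{E}_\mu[g]$, and a short rearrangement shows that this is equivalent to $(a_1-a_0)(b_1-b_0)\geq 0$, which follows from $a_0\leq a_1$ and $b_0\leq b_1$ — precisely the pointwise inequalities $f_0\leq f_1$ and $g_0\leq g_1$ integrated against $\mu'$. There is no substantial obstacle: the algebra is routine and the argument uses only the product structure of $\mu$ on $\{0,1\}^n$. The one point requiring care is that both ingredients of the inductive step — the coordinatewise monotonicity of each restriction $f_j$ and the pointwise inequality $f_0\leq f_1$ — must be extracted from the single hypothesis that $f$ is monotone on the $n$-cube, after which the proof is self-contained.
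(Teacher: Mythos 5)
Your proof is correct and complete. Note, however, that the paper does not prove Theorem~\ref{Thm:FKG} at all: it is stated purely as a recalled classical result, with a citation to the original Fortuin--Kasteleyn--Ginibre paper, and is then used as a black box in the proof of Proposition~\ref{Prop:Biased-Product}. So there is no in-paper argument to compare against; what you have supplied is the standard self-contained proof for the special case actually needed (product of uniform measures on $\{0,1\}$), namely induction on the dimension, sometimes attributed to Harris or Kleitman. All the steps check out: the base-case identity $\mathbb{E}_\mu[fg]-\mathbb{E}_\mu[f]\mathbb{E}_\mu[g]=\tfrac14(f(1)-f(0))(g(1)-g(0))$ is a direct computation; the two consequences of monotonicity of $f$ on the $n$-cube (each restriction $f_j$ is monotone on the $(n-1)$-cube, and $f_0\le f_1$ pointwise) are exactly what the inductive step needs; and the final rearrangement to $(a_1-a_0)(b_1-b_0)\ge 0$ is the same Chebyshev-type inequality as the base case. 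The reduction of assertion (2) to assertion (1) via $g\mapsto -g$ is also fine. One could remark that your argument relies on the product structure of the uniform measure and on total ordering of each coordinate, so it proves Harris's inequality rather than the full FKG inequality for log-supermodular measures on distributive lattices; but that generality is not needed anywhere in the paper.
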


\noindent \textbf{Proof of Proposition~\ref{Prop:Biased-Product}.} By
the definition of the noise operator $T_{\epsilon}$, we have
\[
\frac{1}{\delta} \langle\langle f,g \rangle\rangle_{\delta} =
\frac{1}{\delta} \sum_{S \neq \emptyset} \delta^{|S|} \hat f(S) \hat g(S)
= \frac{1}{\delta}\sum_{S \neq \emptyset} \widehat{T_{\delta} f} (S)
\hat g(S) .
\]
By the Parseval identity,
\[
\frac{1}{\delta}\sum_{S \neq \emptyset} \widehat{T_{\delta} f} (S)
\hat g(S) = \frac{1}{\delta} \Big(\sum_{S} \widehat{T_{\delta} f} (S)
\hat g(S) - \widehat{T_{\delta} f} (\emptyset) \hat g(\emptyset) \Big) =
\frac{1}{\delta}(\mathbb{E}_{\mu}[T_{\delta}f \cdot
g] - \mathbb{E}_{\mu}[T_{\delta}f] \mathbb{E}_{\mu}[g]).
\]
Hence, Inequality~(\ref{Eq:Biased-Product1}) is equivalent to the inequality:
\begin{equation}
\frac{1}{\delta}(\mathbb{E}_{\mu}[T_{\delta}f \cdot g] -
\mathbb{E}_{\mu}[T_{\delta}f] \mathbb{E}_{\mu}[g]) \geq 0.
\label{Eq:Biased-Product2}
\end{equation}
Since the function $g$ is monotone increasing, Inequality~(\ref{Eq:Biased-Product2})
will follow from the FKG inequality, once we show that $T_{\delta} f$ is monotone
increasing if $0 \leq \delta \leq 1$, and monotone decreasing if $-1 \leq \delta \leq 0$.
We show the case of $-1 \leq \delta \leq 0$ (the case of positive $\delta$ is similar).

\medskip

\noindent Without loss of generality, it is sufficient to prove that for
all $(x_2,\ldots,x_n) \in \{0,1\}^{n-1}$,
\begin{equation}
T_{\delta} f (0,x_2,\ldots,x_n) \geq T_{\delta} f
(1,x_2,\ldots,x_n).
\label{Eq4.2.1}
\end{equation}
Using the equivalent definition of the noise operator presented
in Section~\ref{sec:sub:noise} (i.e., Equation~(\ref{Eq:Beckner1})),
\[
T_{\delta} f(x_1,x_2,\ldots,x_n) = \mathbb{E} [f((x_1,x_2,\ldots,x_n)
\oplus (y_1,\ldots,y_n))],
\]
where each $y_i$ is distributed according to the distribution
$\Pr[y_i=0]=(1+\delta)/2, \Pr[y_i=1]=(1-\delta)/2$,
independently of other $y_i$'s. Thus, we have to show that for all
$(x_2,\ldots,x_n) \in \{0,1\}^{n-1}$,
\[
\mathbb{E} [f((y_1,x_2 \oplus y_2,\ldots,x_n \oplus y_n))] \geq
\mathbb{E} [f((1 \oplus y_1,x_2 \oplus y_2,\ldots,x_n \oplus y_n))].
\]
Therefore, it is sufficient to show that for each
$(z_2,\ldots,z_n) \in \{0,1\}^{n-1}$,
\[
\mathbb{E}_{y_1} [f((y_1,z_2,\ldots,z_n))] \geq
\mathbb{E}_{y_1} [f((1 \oplus y_1,z_2,\ldots,z_n))],
\]
or equivalently
\[
\frac{1+\delta}{2} f(0,z_2,\ldots,z_n) + \frac{1-\delta}{2} f(1,z_2,\ldots,z_n) \geq
\frac{1-\delta}{2} f(0,z_2,\ldots,z_n) + \frac{1+\delta}{2} f(1,z_2,\ldots,z_n).
\]
This inequality indeed follows from the monotonicity of $f$, since $\delta \leq 0$.
This completes the proof of Proposition~\ref{Prop:Biased-Product}.
$\square$

\medskip

\noindent For a general even product distribution of the preferences, the
probability of a rational choice for balanced monotone choice functions
can be as low as $1/2$ (compared to $3/4$ in the case
$\alpha,\beta,\gamma \leq 1/4$). An example in which the probability is
$1/2$ is the following:

\medskip

\begin{example}
Assume that the distribution on the preferences is: $\Pr[(x_i,y_i,z_i)=(1,1,0)]=1/2$
and $\Pr[(x_i,y_i,z_i)=(0,0,1)]=1/2$, while the probability of the
other preferences is zero (i.e., $\alpha=1/2$ and
$\beta=\gamma=0$). The choice functions $f$ and $g$ are a
dictatorship of the first voter, and $h$ is a dictatorship of the
second voter. Then it is easy to see that $W(f,g,h)=1/2$.
\end{example}

\noindent It can be shown that $1/2$ is a lower bound for the
probability of a rational choice in our case. Indeed, by
Theorem~\ref{Thm:Formula}, for balanced choice functions we have
\begin{equation}
W(f,g,h)= 1/4 + \langle\langle f,g \rangle\rangle_{4 \alpha -1} +
\langle\langle g,h \rangle\rangle_{4 \beta -1}
+ \langle\langle h,f \rangle\rangle_{4 \gamma -1}.
\label{Eq4.1.4.1}
\end{equation}
By Proposition~\ref{Prop:Biased-Product}, an expression of the
form $\langle\langle f,g \rangle\rangle_{4 \alpha -1}$ can be
positive only if $\alpha > 1/4$. Since in
our distribution $\alpha+\beta+\gamma=1/2$, at most one of the
expressions of this form appearing in Equation~(\ref{Eq4.1.4.1}) is
positive. By the Cauchy-Schwarz inequality,
\[
\langle\langle f,g \rangle\rangle_{4 \alpha -1} = \sum_{S \neq \emptyset} \hat f(S) \hat g(S) (4 \alpha-1)^{|S|} \leq 1/4,
\]
and similarly for $\beta$ and $\gamma$. Therefore,
$W(f,g,h) \leq 1/4 + (1/4 +0 +0) =1/2$.

\medskip

\noindent The probability of a rational choice is equal to $1/2$ if
and only if $\langle\langle f,g \rangle\rangle_{4 \alpha -1}=1/4$, and
$\langle\langle g,h \rangle\rangle_{4 \beta -1} =
\langle\langle h,f \rangle\rangle_{4 \gamma -1} = 0$ (up to
a permutation between $\alpha,\beta,$ and $\gamma$). By the
Cauchy-Schwarz inequality, this occurs if and only if the following
three conditions are satisfied:
\begin{itemize}
\item The distribution of the preferences is $\alpha=1/2,
\beta=\gamma=0$.
\item The choice functions $f,g$ satisfy $f=g$.
\item The choice function $h$ is independent of $f$, in the
following sense: The set of voters $\{1,\ldots,n\}$ can be partitioned
into two disjoint sets $A$ and $B$ such that the output of $f$
depends only on the elements of $A$, and the output of $h$ depends
only on the elements of $B$.
\end{itemize}

\subsection{General Balanced GSWFs}

In~\cite{Kalai-Choice} it is stated (Proposition 5.2) that if the
preferences are uniformly distributed, then the lower bound for
the probability of rational choice for general balanced GSWFs is
$2/3$. However, the proof sketched in~\cite{Kalai-Choice} is
insufficient\footnote{The proof in~\cite{Kalai-Choice} assumes
implicitly that the least possible probability is achieved when
the Fourier-Walsh coefficients of the functions $f,g,h$ are concentrated
on the second level. It is not clear whether this assumption
is correct.}, and it is not even clear that the lower bound itself is
correct. In this subsection we prove a weaker lower bound, and
discuss its tightness.

\begin{theorem}
Consider a GSWF on three alternatives satisfying the IIA condition
such that the choice functions between the pairs of alternatives
are balanced. If the preferences are uniformly distributed then the
probability of a rational choice is at least $5/8$.
\label{Thm:Balanced}
\end{theorem}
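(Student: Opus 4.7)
The plan is to apply Theorem~\ref{Thm:Formula} with $\alpha=\beta=\gamma=1/6$ (uniform preferences give $4\alpha-1=4\beta-1=4\gamma-1=-1/3$). Since $f,g,h$ are balanced, $p_1=p_2=p_3=1/2$, so
\[
W(f,g,h) = \tfrac{1}{4} + T, \qquad T := \langle\langle f,g \rangle\rangle_{-1/3} + \langle\langle g,h \rangle\rangle_{-1/3} + \langle\langle h,f \rangle\rangle_{-1/3}.
\]
It therefore suffices to prove $T \leq 1/8$.

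I would expand $T$ level by level as $T = \sum_{k\geq 1}(-1/3)^k P_k$, where
\[
P_k := \sum_{|S|=k}\bigl(\hat f(S)\hat g(S) + \hat g(S)\hat h(S) + \hat h(S)\hat f(S)\bigr) = \langle u_k,v_k\rangle + \langle v_k,w_k\rangle + \langle w_k,u_k\rangle,
\]
with $u_k,v_k,w_k$ the level-$k$ Fourier vectors of $f,g,h$. The identity $2(\langle u,v\rangle+\langle v,w\rangle+\langle w,u\rangle) = \|u+v+w\|^2 - (\|u\|^2+\|v\|^2+\|w\|^2)$, combined with the elementary bounds $0 \leq \|u+v+w\|^2 \leq 3(\|u\|^2+\|v\|^2+\|w\|^2)$, yields
\[
-\tfrac{1}{2}s_k \;\leq\; P_k \;\leq\; s_k, \qquad s_k := \|u_k\|^2+\|v_k\|^2+\|w_k\|^2.
\]

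Now I would exploit the sign of $\delta=-1/3$: for even $k$, $(-1/3)^k P_k \leq (1/3)^k s_k$; for odd $k$, pairing the negative coefficient $-(1/3)^k$ with the lower bound $P_k \geq -s_k/2$ gives $(-1/3)^k P_k \leq \tfrac{1}{2}(1/3)^k s_k$. The largest coefficient of $s_k$ obtained this way, over all $k\geq 1$, is $1/6$ (attained at $k=1$, since $1/6 > 1/9 > 1/54 > \cdots$). By Parseval, each balanced Boolean function has $\sum_{S\neq\emptyset}\hat f(S)^2 = 1/2 - 1/4 = 1/4$, so $\sum_{k\geq 1}s_k = 3/4$, and thus
\[
T \;\leq\; \tfrac{1}{6}\sum_{k\geq 1} s_k \;=\; \tfrac{1}{6}\cdot\tfrac{3}{4} \;=\; \tfrac{1}{8},
\]
which yields $W(f,g,h)\leq 3/8$ and hence rational choice with probability at least $5/8$.

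The delicate point is the level-wise lower bound $P_k \geq -s_k/2$: a term-wise Cauchy--Schwarz applied to each pair gives only $|P_k|\leq \sqrt{a_k b_k}+\sqrt{b_k c_k}+\sqrt{c_k a_k}\leq s_k$, leading merely to $T\leq 1/4$ and the useless bound $W\leq 1/2$. The improvement to $1/8$ rests on the geometric obstruction that three vectors cannot have all three pairwise inner products simultaneously too negative (quantified sharply by $\|u+v+w\|^2\geq 0$), together with the negativity of $\delta$, which makes the odd-level terms the binding constraint and contributes the crucial extra factor $1/2$.
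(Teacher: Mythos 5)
Your proof is correct and follows essentially the same route as the paper's: both isolate the level-one terms, control them via the inequality $-(xy+yz+zx)\leq(x^2+y^2+z^2)/2$ (your $\|u+v+w\|^2\geq 0$), bound the contribution of levels above one by $\tfrac{1}{9}$ of their total Fourier weight, and finish with Parseval for balanced Boolean functions. The only cosmetic difference is that the paper handles the higher levels by Cauchy--Schwarz plus the AM--GM inequality rather than your uniform norm identity; the resulting estimates coincide.
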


\begin{proof}
Consider the Fourier-Walsh expansions of the choice functions $f,g$, and $h$. Let
\[
\sum_{i=1}^n \hat f(\{i\})^2=a, \qquad \sum_{i=1}^n \hat g(\{i\})^2=b, \qquad \sum_{i=1}^n \hat h(\{i\})^2=c.
\]
Since $f,g,$ and $h$ are balanced, then by the Parseval identity
\[
\sum_{|S|>1} \hat f(S)^2 = 1/4-a, \qquad \sum_{|S|>1} \hat g(S)^2 = 1/4-b, \qquad \sum_{|S|>1} \hat h(S)^2 =
1/4-c.
\]
Recall that by Theorem~\ref{Thm:Formula}, in our case
\begin{equation}
W(f,g,h)= 1/4 + \langle\langle f,g \rangle\rangle_{-1/3} + \langle\langle g,h \rangle\rangle_{-1/3} +
\langle\langle h,f \rangle\rangle_{-1/3}.
\label{Eq4.2.2.1}
\end{equation}
We have
\[
\langle \langle f,g \rangle \rangle_{-1/3} + \langle \langle g,h \rangle \rangle_{-1/3} + \langle \langle h,f
\rangle \rangle_{-1/3} = \sum_{|S|>0} \Big(\hat f(S) \hat g(S) + \hat g(S) \hat h(S) + \hat h(S) \hat f(S) \Big)
(-1/3)^{|S|}=
\]
\[
=-\frac{1}{3}\sum_{|S|=1} \Big(\hat f(S) \hat g(S) + \hat g(S) \hat h(S) + \hat h(S) \hat f(S)\Big) + \sum_{|S|>1} \Big(\hat f(S) \hat g(S) + \hat g(S) \hat h(S) + \hat h(S) \hat f(S)\Big) (-1/3)^{|S|} \leq
\]
\[
\leq -\frac{1}{3} \sum_{|S|=1} \Big(\hat f(S) \hat g(S) + \hat g(S) \hat h(S) + \hat h(S) \hat f(S) \Big) +
\frac{1}{9} \sum_{|S|>1} \Big|\hat f(S) \hat g(S) + \hat g(S) \hat h(S) + \hat h(S) \hat f(S) \Big|.
\]
In order to bound the first summand, we use the elementary
inequality
\[
-(xy+yz+xz) \leq (x^2+y^2+z^2)/2.
\]
We get
\[
-\frac{1}{3}\sum_{|S|=1} \Big(\hat f(S) \hat g(S) + \hat g(S) \hat h(S) + \hat h(S) \hat f(S) \Big) =
-\frac{1}{3} \sum_{i=1}^n \Big(\hat f(\{i\}) \hat g(\{i\}) + \hat g(\{i\}) \hat h(\{i\}) +
\hat h(\{i\}) \hat f(\{i\})\Big) \leq
\]
\[
\leq \frac{1}{6} \sum_{i=1}^n \Big(\hat f(\{i\})^2 + \hat g(\{i\})^2 + \hat h(\{i\})^2 \Big) = \frac{a+b+c}{6}.
\]
In order to bound the second summand, we use the Cauchy-Schwarz
inequality and the inequality between the arithmetic and the
geometric means. Let
\[
\tilde{f}=\sum_{|S|>1} |\hat f(S)| r_S, \qquad
\tilde{g}=\sum_{|S|>1} |\hat g(S)| r_S.
\]
Applying the Cauchy-Schwarz inequality and the Parseval identity we get
\[
\sum_{|S|>1} |\hat f(S) \hat g(S)|  = \langle \tilde{f},\tilde{g}
\rangle \leq ||\tilde{f}||_2 ||\tilde{g}||_2 =
\sqrt{(1/4-a)(1/4-b)} \leq 1/4 - (a+b)/2,
\]
where the last inequality follows from the inequality between the
arithmetic and the geometric means. Applying the same inequalities
to the pairs $(g,h)$ and $(h,f)$, we get
\[
\frac{1}{9} \sum_{|S|>1} \Big|\hat f(S) \hat g(S) + \hat g(S) \hat h(S) + \hat h(S) \hat f(S) \Big| \leq
\]
\[
\leq \frac{1}{9} \Big(\frac{1}{4} - \frac{a+b}{2} + \frac{1}{4} - \frac{b+c}{2} +
\frac{1}{4} - \frac{c+a}{2} \Big) = \frac{1}{12} - \frac{a+b+c}{9}.
\]
Combining the bounds obtained above, we get
\[
-\frac{1}{3}\sum_{|S|=1} \Big(\hat f(S) \hat g(S) + \hat g(S) \hat h(S) + \hat h(S) \hat f(S) \Big) +
\frac{1}{9} \sum_{|S|>1} \Big|\hat f(S) \hat g(S) + \hat g(S) \hat h(S) + \hat h(S) \hat f(S) \Big| \leq
\]
\[
\leq \frac{a+b+c}{6} + \frac{1}{12} - \frac{a+b+c}{9} = \frac{1}{12} + \frac{a+b+c}{18}.
\]
Substitution to Equation~(\ref{Eq4.2.2.1}) yields:
\[
W(f,g,h) \leq 1/4 + 1/12 + (a+b+c)/18 = 1/3+(a+b+c)/18.
\]
Finally, since by the Parseval identity we have $0 \leq a,b,c \leq 1/4$, the maximum
in the right hand side is obtained for $a=b=c=1/4$, and thus,
\[
W(f,g,h) \leq 1/3+(3/4)/18 = 3/8,
\]
as asserted.
\end{proof}

\noindent The tightness of the lower bound in
Theorem~\ref{Thm:Balanced} is not clear to us. The example
presented in~\cite{Kalai-Choice} yields the value $W(f,g,h)=1/3$,
where all the Fourier-Walsh coefficients of $f,g,$
and $h$ are concentrated on the second level. Another example
yielding the same value of $W(f,g,h)$ is
\[
f(x_1,\ldots,x_n)=x_i, \qquad g(x_1,\ldots,x_n)=x_i, \qquad h(x_1,\ldots,x_n)=1-x_i,
\]
for any $1\leq i \leq n$. In this example, all the weight of
$f,g,$ and $h$ is concentrated on the first level. It seems
possible that the correct lower bound is $2/3$, as asserted
in~\cite{Kalai-Choice}. However, in order to prove this bound, one
has to exploit the fact that the choice functions are {\it
Boolean}, as can be seen in the following example:

\medskip

\begin{example}
Let $f,g,h$ be defined by $\hat f(\emptyset) = \hat g(\emptyset) =
\hat h(\emptyset)=1/2$ and
\[
\hat f({i}) = \frac{2}{2\sqrt{6}}, \qquad \hat f({j}) =
-\frac{1}{2\sqrt{6}}, \qquad \hat f({k}) = -\frac{1}{2\sqrt{6}},
\]
\[
\hat g({i}) = -\frac{1}{2\sqrt{6}}, \qquad \hat g({j}) =
\frac{2}{2\sqrt{6}}, \qquad \hat g({k}) = -\frac{1}{2\sqrt{6}},
\]
\[
\hat h({i}) = -\frac{1}{2\sqrt{6}}, \qquad \hat h({j}) =
-\frac{1}{2\sqrt{6}}, \qquad \hat h({k}) = \frac{2}{2\sqrt{6}},
\]
for $1 \leq i<j<k \leq n$. The rest of the Fourier-Walsh
coefficients of $f,g,$ and $h$ are zero. Since
\[
\sum_{S \neq \emptyset} \hat f(S)^2 = \sum_{S \neq \emptyset} \hat
g(S)^2 = \sum_{S \neq \emptyset} \hat h(S)^2 = 1/4,
\]
the functions $f,g,$ and $h$ ``look like'' balanced functions from
the Fourier-theoretic point of view. Nevertheless, $W(f,g,h)=3/8$,
which agrees with the lower bound of Theorem~\ref{Thm:Balanced}.
This shows that in order to improve Theorem~\ref{Thm:Balanced}, we
have to use the fact that $f,g,$ and $h$ are Boolean functions.
\end{example}

\section{Upper Bounds on the Probability of Rational Choice}
\label{Sec:Upper-Bounds}

Throughout this section we assume that the preferences are
uniformly distributed.

\medskip

\noindent In this section we discuss Kalai's~\cite{Kalai-Choice}
proof of Arrow's Impossibility theorem for neutral GSWFs on three
alternatives. First we discuss the possibility of extending
Kalai's proof to other special cases of Arrow's theorem, and then
we discuss the stability version of the theorem proved by Kalai
(for neutral GSWFs).

\subsection{Extending Kalai's Proof to Other Special Cases of Arrow's
Theorem}

Kalai's proof uses the Fourier-theoretic formula for the
probability of irrational choice for GSWFs on three altrenatives
satisfying the IIA condition (Theorem~\ref{Thm:Formula}). For a
balanced GSWF, the formula reads:
\begin{equation}\label{Eq-New:1}
W(f,g,h)= 1/4 + \langle\langle f,g \rangle\rangle_{-1/3} +
\langle\langle g,h \rangle\rangle_{-1/3} + \langle\langle h,f
\rangle\rangle_{-1/3}.
\end{equation}
Define
\[
\tilde{f}=\sum_{S \neq \emptyset} \hat f(S) r_S, \qquad
\overline{g}=\sum_{S \neq \emptyset} \hat g(S) (-1/3)^{|S|} r_S.
\]
Note that since $f$ and $g$ are balanced, by the Parseval identity
$||\tilde{f}||_2=1/2$ and $||\overline{g}||_2 \leq 1/6$.
Therefore, by the Cauchy-Schwarz inequality,
\[
|\langle\langle f,g \rangle\rangle_{-1/3}| = |\langle
\tilde{f},\overline{g} \rangle| \leq ||\tilde{f}||_2
||\overline{g}||_2 \leq 1/12,
\]
and it can be shown that equality can hold only if all the
Fourier-Walsh coefficients of $f$ and of $g$ are on the first
level. Then, it can be further shown that $W(f,g,h)=0$ can hold
only if $f,g,$ and $h$ are dictatorships of the same voter, and
this completes the proof of the theorem.

It was suggested in~\cite{Kalai-Choice} to use the same reasoning
in the non-balanced case. Such generalization is possible if
$p_1,p_2,$ and $p_3$, the expectations of $f,g,$ and $h$, satisfy
some condition described in~\cite{Kalai-Choice}. However, this
condition is not satisfied in many cases, e.g., for $p_1=p_2=1/5$
and $p_3=1$, as noted in~\cite{Kalai-Choice}.
Kalai~\cite{Kalai-Private} suggested to improve the upper bound
$||\overline{g}||_2 \leq 1/6$ (or, more generally,
$||\overline{g}||_2 \leq \sqrt{p_2(1-p_2)}/3$) used in the proof
by using the Bonamie-Beckner hypercontractive
inequality~\cite{Bonamie,Beckner}.

We show by an example that this proof strategy, even using the
hypercontractive inequality, cannot lead to a complete proof of
Arrow's theorem. The example shows that if the biased inner
product $\langle\langle f,g \rangle\rangle_{-1/3}$ is replaced by
\[
\langle\langle f,g \rangle\rangle'_{-1/3}=-\sum_{S \neq \emptyset}
|\hat f(S) \hat g(S) (-1/3)^{|S|}|,
\]
then there exist functions $f,g,h$ such that
\[
W'(f,g,h)=p_1 p_2 p_3 + (1-p_1)(1-p_2)(1-p_3) + \langle\langle f,g
\rangle\rangle'_{-1/3} + \langle\langle g,h \rangle\rangle'_{-1/3}
+ \langle\langle h,f \rangle\rangle'_{-1/3} < 0.
\]
Hence, a proof of Arrow's theorem using Equation~(\ref{Eq-New:1})
cannot ignore the {\it sign} of the Fourier-Walsh coefficients of
the choice functions.

\medskip

\noindent The example uses the notion of a {\it dual} function:
\begin{definition}
Let $f:\{0,1\}^n \rightarrow \{0,1\}$. The dual function of $f$ (which
we denote by $f':\{0,1\}^n \rightarrow \{0,1\}$), is defined by
\[
f'(x_1,x_2,\ldots,x_n)=1-f(1-x_1,1-x_2,\ldots,1-x_n).
\]
\end{definition}
\noindent The Fourier-Walsh expansion of the dual function is
closely related to the expansion of the original function:
\begin{claim}
Consider the Fourier-Walsh expansions of a Boolean function $f$
and its dual function $f'$. For all $S \subset \{1,\ldots,n\}$
with $|S| \geq 1$,
\[
\hat f'(S) = (-1)^{|S|-1} \hat f(S).
\]
\label{Claim5.1}
\end{claim}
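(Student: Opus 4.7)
The plan is to prove the claim by a direct computation of the Fourier coefficient $\hat{f'}(S)$ from the definition, exploiting the fact that complementation $x \mapsto \bar{x} = (1-x_1,\ldots,1-x_n)$ is a measure-preserving bijection on $\{0,1\}^n$ with respect to the uniform measure $\mu$.

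The first step is to write $\hat{f'}(S) = \langle f', r_S \rangle = \mathbb{E}_\mu[(1 - f(\bar{x})) r_S(x)]$, and then split this into $\mathbb{E}_\mu[r_S(x)] - \mathbb{E}_\mu[f(\bar{x}) r_S(x)]$. Since $|S| \geq 1$, the first term vanishes because $\hat{1}(S) = 0$ for nonempty $S$, so everything reduces to evaluating $-\mathbb{E}_\mu[f(\bar{x}) r_S(x)]$.

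The key algebraic observation is that the Rademacher product transforms in a controlled way under complementation: for each $i \in S$, the factor $r_i(\bar{x}) = 2(1 - x_i) - 1 = -(2x_i - 1) = -r_i(x)$, so $r_S(\bar{x}) = (-1)^{|S|} r_S(x)$, which we rewrite as $r_S(x) = (-1)^{|S|} r_S(\bar{x})$. Substituting this into the expression above gives $-\mathbb{E}_\mu[f(\bar{x}) r_S(x)] = -(-1)^{|S|} \mathbb{E}_\mu[f(\bar{x}) r_S(\bar{x})]$, and then the change of variable $y = \bar{x}$ (which is measure-preserving) collapses the right-hand side to $-(-1)^{|S|} \hat{f}(S) = (-1)^{|S|-1} \hat{f}(S)$, as required.

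There is no real obstacle here; the argument is a two-line computation once one notices how $r_S$ behaves under coordinate-wise complementation. The only thing worth stating explicitly in the write-up is the measure-preserving property of $x \mapsto \bar{x}$, which justifies the change of variable at the last step.
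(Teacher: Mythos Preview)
Your proof is correct and is exactly the kind of direct computation one would expect; the paper itself omits the proof entirely, stating only that ``the simple proof of the claim is omitted.'' Your write-up fills that gap cleanly.
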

\noindent The simple proof of the claim is omitted.

\medskip

\begin{example}
Assume that $n$ is odd, $f(x_1,x_2,\ldots,x_n)=x_1 \cdot x_2 \cdot
\ldots \cdot x_n$ is the AND function, $g=f'$ is its dual
function, and $h$ is the majority function. We have
\[
p_1=\mathbb{E}[f]=2^{-n}, \qquad p_2=\mathbb{E}[g]=1-2^{-n},
\qquad p_3=\mathbb{E}[h]=1/2.
\]
The Fourier-Walsh coefficients of $f$ satisfy $|\hat f(S)|=2^{-n}$
for all $S \subset \{1,\ldots,n\}$. The first-level Fourier-Walsh
coefficients of the majority function are
\[
\hat h(\{i\}) = {{n-1}\choose{(n-1)/2}}2^{-n} \approx
\sqrt{\frac{1}{2 \pi n}}
\]
for all $1 \leq i \leq n$. Hence,
\[
\langle\langle h,f \rangle\rangle'_{-1/3} \leq -\frac{1}{3} \sum_{i=1}^n
|\hat h(\{i\}) \hat f(\{i\})| \approx -\frac{1}{3} n 2^{-n}
\sqrt{\frac{1}{2 \pi n}} = -\frac{1}{3 \sqrt{2 \pi}} \sqrt{n}
2^{-n}.
\]
Therefore,
\[
W'(f,g,h) \leq p_1 p_2 p_3 + (1-p_1)(1-p_2)(1-p_3) +
\langle\langle h,f \rangle\rangle'_{-1/3} \leq 2^{-n}(1-2^{-n})
-\frac{1}{3 \sqrt{2}} \sqrt{n} 2^{-n} < 0,
\]
for $n$ large enough.
\end{example}

\bigskip

A possible step towards a Fourier-theoretic proof of Arrow's
theorem in the general case is the following lower bound on the
biased inner product $\langle\langle f,g \rangle\rangle_{\delta}$:
\begin{proposition}
Let $f,g:\{0,1\}^n \rightarrow \mathbb{R}_{+}$ be non-negative
functions with $\mathbb{E}[f]=p_1$ and $\mathbb{E}[g]=p_2$, and
let $-1 \leq \delta \leq 1$. Then
\[
\langle\langle f,g \rangle\rangle_{\delta} \geq -p_1 p_2,
\]
and equality holds if and only if either $f \equiv 0$ or $g \equiv 0$.
\label{Prop:Lower-Bound-Biased-Product}
\end{proposition}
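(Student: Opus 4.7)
The plan is to rewrite the biased inner product as an ordinary $L^2$-expectation involving the noise operator, and then use the fact that the noise operator preserves pointwise non-negativity for every $\delta \in [-1,1]$. First I would use the Fourier description of $T_\delta$ together with Parseval's identity to obtain
\[
\sum_{S} \hat f(S)\, \hat g(S)\, \delta^{|S|} \;=\; \langle T_\delta f,\, g \rangle \;=\; \mathbb{E}_\mu[T_\delta f \cdot g].
\]
Peeling off the $S=\emptyset$ contribution, which equals $\hat f(\emptyset)\hat g(\emptyset) = p_1 p_2$, this yields the key identity
\[
\langle\langle f,g \rangle\rangle_\delta \;=\; \mathbb{E}_\mu[T_\delta f \cdot g] \;-\; p_1 p_2,
\]
so the claimed inequality is equivalent to $\mathbb{E}_\mu[T_\delta f \cdot g] \geq 0$.

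Next I would invoke the probabilistic description of $T_\delta$ valid for every $\delta \in [-1,1]$ (as recorded at the end of Section~\ref{sec:sub:noise}): $T_\delta f(x) = \mathbb{E}[f(x \oplus y)]$, where the bits $y_i$ are independent with $\Pr[y_i=0] = (1+\delta)/2$ and $\Pr[y_i=1] = (1-\delta)/2$. For $\delta \in [-1,1]$ both probabilities lie in $[0,1]$, so $y$ is a genuine random variable and $T_\delta f(x)$ is a non-negative weighted average of non-negative values of $f$. Hence $T_\delta f \geq 0$ pointwise; combined with $g \geq 0$ pointwise, this makes $T_\delta f \cdot g$ a non-negative function on the cube, and taking expectation gives $\mathbb{E}_\mu[T_\delta f \cdot g] \geq 0$, which is the desired bound.

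For the equality characterization, one direction is trivial: if either $f$ or $g$ is identically zero, both sides equal $0$. For the converse, the main case is $\delta \in (-1,1)$. Here every $y \in \{0,1\}^n$ receives strictly positive weight under the noise distribution, so the identity $T_\delta f(x) = \sum_y \Pr[y]\, f(x \oplus y)$ shows that $T_\delta f(x) > 0$ at every $x$ whenever $f \not\equiv 0$; combined with $g \geq 0$, the vanishing of $\mathbb{E}_\mu[T_\delta f \cdot g]$ then forces $g \equiv 0$. The most delicate point in the plan is the boundary behavior at $\delta = \pm 1$: there the noise distribution degenerates ($T_1 f = f$ and $T_{-1} f(x) = f(x \oplus \mathbf{1})$), and $\mathbb{E}_\mu[T_\delta f \cdot g] = 0$ only forces a disjoint-support condition between (a possibly reflected copy of) $f$ and $g$ rather than the vanishing of either function; I would either tacitly restrict the equality claim to $|\delta| < 1$ or dispose of $\delta = \pm 1$ with a short separate argument.
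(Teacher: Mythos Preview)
Your argument is correct and is essentially the paper's proof, just streamlined: both rewrite $\langle\langle f,g\rangle\rangle_\delta + p_1 p_2$ as $\mathbb{E}_\mu[(T_\delta f)\, g]$ and then use that the noise operator preserves pointwise non-negativity. The only difference is cosmetic: the paper treats $\delta<0$ by writing $T_\delta f = T_{-\delta} f''$ with $f''(x)=f(\mathbf{1}\oplus x)$ so as to work with a positive noise parameter, whereas you invoke the probabilistic description of $T_\delta$ directly for all $\delta\in[-1,1]$ (exactly as Section~\ref{sec:sub:noise} permits). This saves a step but the underlying idea is identical.

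Your caveat about the endpoints $\delta=\pm 1$ is not only legitimate but actually sharper than the paper's treatment: the paper asserts that $T_{-\delta} f''$ is strictly positive whenever $f\not\equiv 0$ without excluding $-\delta=1$, which is not true in that boundary case. As you note, at $\delta=1$ the equality $\langle\langle f,g\rangle\rangle_1=-p_1p_2$ amounts to $\mathbb{E}_\mu[fg]=0$, which for non-negative $f,g$ only forces disjoint supports; so the ``if and only if'' in the equality clause really requires $|\delta|<1$. The paper's applications of the proposition use $\delta=-1/3$, so this does not affect anything downstream.
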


\begin{proof}
We prove the proposition in the case $\delta<0$, the case $\delta
\geq 0$ is similar. Let
$f''(x_1,\ldots,x_n)=f(1-x_1,\ldots,1-x_n)$. Clearly, $\hat
f''(\emptyset) = \mathbb{E}[f''] = p_1$. By Claim~\ref{Claim5.1},
for all $S \neq \emptyset$,
\[
\hat f''(S)= (-1)^{|S|} \hat f(S).
\]
Hence, by the definition of the Bonamie-Beckner noise operator,
\[
\widehat{T_{-\delta} f''} (S) = (-\delta)^{|S|} (-1)^{|S|} \hat
f(S) = \delta^{|S|} \hat f(S).
\]
Therefore, by the Parseval identity,
\[
\langle\langle f,g \rangle\rangle_{\delta} + p_1 p_2 = \sum_{S
\neq \emptyset} \hat f(S) \hat g(S) \delta^{|S|} + p_1 p_2 =
\sum_{S \neq \emptyset} \widehat{T_{-\delta} f''} (S) \hat g(S) +
\widehat{T_{-\delta} f''} (\emptyset) \hat g(\emptyset) = \langle
T_{-\delta} f'' , g \rangle.
\]
Finally, by the assumption $g$ is non-negative, and by
Equation~(\ref{Eq:Beckner1}), the function $T_{-\delta} f''$ is
strictly positive, unless $f \equiv 0$. Hence,
\[
\langle T_{-\delta} f'' , g \rangle = \frac{1}{2^n}\sum_{x \in
\{0,1\}^n} T_{-\delta} f''(x) g(x) > 0,
\]
unless either $f \equiv 0$ or $g \equiv 0$, and in that cases
$\langle T_{-\delta} f'' , g \rangle = 0$. This completes the
proof of the proposition.
\end{proof}

\begin{corollary}
The assertion of Arrow's theorem holds if $p_1+p_2+p_3 \leq 1$,
where $p_1,p_2,$ and $p_3$ are the expectations of the choice
functions $f,g,$ and $h$.
\end{corollary}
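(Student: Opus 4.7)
The plan is to combine the formula of Theorem~\ref{Thm:Formula} with three applications of Proposition~\ref{Prop:Lower-Bound-Biased-Product}, and then watch the resulting expression in $p_1,p_2,p_3$ collapse to something very clean. Since the preferences are uniform we have $\alpha=\beta=\gamma=1/6$, so $4\alpha-1=4\beta-1=4\gamma-1=-1/3$, and Theorem~\ref{Thm:Formula} gives
\[
W(f,g,h)=p_1p_2p_3+(1-p_1)(1-p_2)(1-p_3)+\langle\langle f,g\rangle\rangle_{-1/3}+\langle\langle g,h\rangle\rangle_{-1/3}+\langle\langle h,f\rangle\rangle_{-1/3}.
\]
Because $f,g,h$ are Boolean (in particular non-negative) with means $p_1,p_2,p_3$, Proposition~\ref{Prop:Lower-Bound-Biased-Product} applied to each pair yields $\langle\langle f,g\rangle\rangle_{-1/3}\geq -p_1p_2$, $\langle\langle g,h\rangle\rangle_{-1/3}\geq -p_2p_3$, and $\langle\langle h,f\rangle\rangle_{-1/3}\geq -p_3p_1$.

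Next I would plug these three bounds into the formula. Expanding $(1-p_1)(1-p_2)(1-p_3)$ via elementary symmetric polynomials, the $p_1p_2p_3$ terms cancel and the second elementary symmetric polynomial in the $p_i$'s is precisely wiped out by the three $-p_ip_j$ contributions, leaving the clean identity
\[
W(f,g,h)\ \geq\ 1-(p_1+p_2+p_3).
\]
So under the hypothesis $p_1+p_2+p_3\leq 1$ one gets $W(f,g,h)\geq 0$, and strictly so whenever the inequality is strict.

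The only remaining subtlety, and the step that requires the most care, is the equality case $p_1+p_2+p_3=1$ with $W(f,g,h)=0$: here I would use the ``if and only if'' in Proposition~\ref{Prop:Lower-Bound-Biased-Product} to argue that each of the three biased inner products must attain its lower bound, which forces each of the three pairs $(f,g),(g,h),(h,f)$ to contain a function identically $0$. A short case check shows that this compels at least two of $p_1,p_2,p_3$ to vanish, and then the constraint $p_1+p_2+p_3=1$ pins the third to $1$, i.e.\ the GSWF is constant with only a single ordering in its range. This contradicts the hypothesis of Arrow's theorem that every ordering appears in the range, so in fact $W(f,g,h)>0$ and the theorem's conclusion holds. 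The main obstacle is essentially this equality analysis; the inequality itself follows from a one-line algebraic simplification once Proposition~\ref{Prop:Lower-Bound-Biased-Product} is in hand.
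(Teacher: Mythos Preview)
Your proof is correct and follows the same approach as the paper: apply Proposition~\ref{Prop:Lower-Bound-Biased-Product} to each of the three biased inner products in the formula of Theorem~\ref{Thm:Formula}, and observe that the resulting lower bound simplifies to $1-(p_1+p_2+p_3)$. The only cosmetic difference is that the paper invokes the non-constancy of $f,g,h$ (guaranteed by Arrow's hypotheses) up front, so that each application of Proposition~\ref{Prop:Lower-Bound-Biased-Product} is already strict and one obtains $W(f,g,h)>1-(p_1+p_2+p_3)\geq 0$ in one stroke, whereas you first derive the non-strict bound and then carry out a separate equality-case analysis; both routes arrive at the same conclusion.
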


\begin{proof}
By Proposition~\ref{Prop:Lower-Bound-Biased-Product},
\[
\langle\langle f,g \rangle\rangle_{-1/3} + \langle\langle g,h
\rangle\rangle_{-1/3} + \langle\langle h,f
\rangle\rangle_{-1/3} > -(p_1 p_2 + p_2 p_3 + p_3 p_1).
\]
(Equality cannot hold since by the assumption of Arrow's theorem,
$f,g,$ and $h$ are non-constant). Hence, by
Equation~(\ref{Eq3.0}),
\[
W(f,g,h) > p_1 p_2 p_3 + (1-p_1)(1-p_2)(1-p_3) - (p_1 p_2 + p_2
p_3 + p_3 p_1) = 1 - p_1 - p_2 - p_3 \geq 0,
\]
and thus the assertion of Arrow's theorem holds.
\end{proof}

\noindent Another corollary of
Proposition~\ref{Prop:Lower-Bound-Biased-Product} uses dual
functions:
\begin{corollary}
Let $f,g:\{0,1\}^n \rightarrow \{0,1\}$ such that
$\mathbb{E}[f]=p_1$ and $\mathbb{E}[g]=p_2$, and let $-1 \leq
\delta \leq 1$. Then
\[
\langle\langle f,g \rangle\rangle_{\delta} \geq -(1-p_1)(1-p_2),
\]
and equality holds if and only if either $f \equiv 1$ or $g \equiv 1$.
\label{Cor:Lower-Bound-Biased-Product}
\end{corollary}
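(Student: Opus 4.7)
The plan is to reduce this corollary to Proposition~\ref{Prop:Lower-Bound-Biased-Product} by passing from $f,g$ to their complements $1-f, 1-g$. These are non-negative (in fact, Boolean) functions with expectations $1-p_1$ and $1-p_2$ respectively, so the earlier proposition applies and yields the desired numerical lower bound, together with the desired equality condition.

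The one technical point to verify is that the biased inner product is invariant under complementation, i.e.,
\[
\langle\langle 1-f, 1-g \rangle\rangle_\delta = \langle\langle f, g \rangle\rangle_\delta.
\]
This is immediate from the identity $\widehat{1-f}(S) = -\hat f(S)$ for every non-empty $S$ (and similarly for $g$): the two minus signs multiply to $+1$ inside each term of the sum $\sum_{S \neq \emptyset} \hat f(S)\hat g(S) \delta^{|S|}$, and the empty set is excluded from the sum by definition of $\langle\langle \cdot,\cdot \rangle\rangle_\delta$.

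Once this identity is in hand, Proposition~\ref{Prop:Lower-Bound-Biased-Product} applied to the non-negative functions $1-f$ and $1-g$ (with expectations $1-p_1, 1-p_2$) gives
\[
\langle\langle f,g \rangle\rangle_\delta = \langle\langle 1-f, 1-g \rangle\rangle_\delta \geq -(1-p_1)(1-p_2),
\]
with equality if and only if $1-f \equiv 0$ or $1-g \equiv 0$, which is exactly the condition $f \equiv 1$ or $g \equiv 1$ stated in the corollary. I do not anticipate any obstacle here; the only delicate point is the sign bookkeeping when writing the Fourier expansion of $1-f$, and that is a one-line check.
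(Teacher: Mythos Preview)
Your proof is correct and follows the same overall strategy as the paper: transform $f,g$ into non-negative functions with expectations $1-p_1,1-p_2$ while preserving the biased inner product, then invoke Proposition~\ref{Prop:Lower-Bound-Biased-Product}. The only difference is in the choice of transformation. The paper passes to the \emph{dual} functions $f'(x)=1-f(1-x_1,\dots,1-x_n)$ and uses Claim~\ref{Claim5.1} (namely $\hat f'(S)=(-1)^{|S|-1}\hat f(S)$) to see that $\hat f'(S)\hat g'(S)=\hat f(S)\hat g(S)$ for $S\neq\emptyset$. You instead pass to the \emph{complements} $1-f,1-g$ and use the more elementary identity $\widehat{1-f}(S)=-\hat f(S)$ for $S\neq\emptyset$. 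Your route is slightly cleaner here, since it avoids the dual-function machinery and Claim~\ref{Claim5.1} altogether; the paper's choice is natural only because duals had already been introduced for an earlier example. Either way the equality case translates identically: $1-f\equiv 0$ (respectively $f'\equiv 0$) iff $f\equiv 1$.
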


\begin{proof}
Denote the dual functions of $f$ and $g$ by $f'$ and $g'$,
respectively. By Claim~\ref{Claim5.1}, for all $S \neq \emptyset$,
\[
\hat f'(S) \hat g'(S) = (-1)^{|S|-1} \hat f(S) (-1)^{|S|-1} \hat
g(S) = \hat f(S) \hat g(S),
\]
and hence
\[
\langle\langle f',g' \rangle\rangle_{\delta} = \langle\langle f,g
\rangle\rangle_{\delta}.
\]
The functions $f',g'$ are non-negative and satisfy
$\mathbb{E}[f']=1-p_1$ and $\mathbb{E}[g']=1-p_2$. Thus, by
Proposition~\ref{Prop:Lower-Bound-Biased-Product},
\[
\langle\langle f',g' \rangle\rangle_{\delta} \geq -(1-p_1)(1-p_2),
\]
and equality holds if and only if $f' \equiv 0$ or $g' \equiv 0$, or
equivalently, if and only if $f \equiv 1$ or $g \equiv 1$.
\end{proof}

Proposition~\ref{Prop:Lower-Bound-Biased-Product} and
Corollary~\ref{Cor:Lower-Bound-Biased-Product} yield an immediate
proof of Arrow's theorem in the case where there exists $1 \leq i
\leq 3$ such that $p_i=0$ or $p_i=1$. Indeed, two of the biased
inner products of the form $\langle\langle f,g
\rangle\rangle_{\delta}$ appearing in Equation~(\ref{Eq3.0})
vanish, and the third biased inner product can be bounded using
either Proposition~\ref{Prop:Lower-Bound-Biased-Product} or
Corollary~\ref{Cor:Lower-Bound-Biased-Product}. This settles the
example given in~\cite{Kalai-Choice}. However, we note that this
case is anyway ruled out by the assumption (made in Arrow's
theorem) that the choice functions are non-constant.

\medskip

It seems possible that Kalai's proof and
Proposition~\ref{Prop:Lower-Bound-Biased-Product} can be extended
to a proof of broader special cases of Arrow's theorem. Such
extension is of interest even after the recent analytic proof of
Arrow's theorem (in the general case) by
Mossel~\cite{Mossel-Arrow}, since in the cases where Kalai's proof
applies, the same argument yields a stability version of the
theorem in which the dependence of $\delta(\epsilon)$ on
$\epsilon$ is linear, while the dependence in Mossel's theorem is
much weaker.

\subsection{Discussion on a Stability Version of Arrow's Theorem}

In~\cite{Kalai-Choice}, Kalai proved a stability version of
Arrow's theorem:
\begin{theorem}[\cite{Kalai-Choice}]
For every $\epsilon>0$ and for every balanced GSWF on three
alternatives, if the probability that the social choice is
irrational is smaller than $\epsilon$ then there is a dictator
such that the probability that the output of the GSWF differs from
the dictator's choice is smaller than $K \cdot \epsilon$, where
$K$ is a universal constant.
\label{Thm:Gil-Stability}
\end{theorem}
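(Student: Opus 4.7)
The plan is to exploit the Fourier formula for the balanced case,
\[
W(f,g,h) = 1/4 + \langle\langle f,g \rangle\rangle_{-1/3} + \langle\langle g,h \rangle\rangle_{-1/3} + \langle\langle h,f \rangle\rangle_{-1/3},
\]
supplied by Theorem~\ref{Thm:Formula}, together with a Cauchy--Schwarz analysis that pinpoints the extremizers of each biased inner product, and finally the Friedgut--Kalai--Naor (FKN) theorem on Boolean functions whose Fourier mass is concentrated on the first level. The intuition is that the three biased inner products are each bounded below by $-1/12$, so if their sum is less than $\epsilon - 1/4$ then \emph{each} of them is within $O(\epsilon)$ of the extreme value $-1/12$; and the extremal configuration is precisely a triple of dictators of the same voter.

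First, I would establish the sharp bound $|\langle\langle f,g \rangle\rangle_{-1/3}| \leq 1/12$ for balanced $f,g$, together with a quantitative stability statement. Writing each biased inner product as $\sum_{k\geq 1}(-1/3)^k \langle f^{=k},g^{=k}\rangle$, where $f^{=k}$ is the projection on the $k$-th Fourier level, and applying Cauchy--Schwarz level-by-level, followed by another application of Cauchy--Schwarz across levels, together with the Parseval identities $\sum_{k\geq 1}\|f^{=k}\|_2^2 = \sum_{k\geq 1}\|g^{=k}\|_2^2 = 1/4$, yields the bound. The slack in this chain of inequalities can be tracked: a deficit $\tau = 1/12 - |\langle\langle f,g \rangle\rangle_{-1/3}|$ forces (i) the higher-level weight $\sum_{k\geq 2}\hat f(S)^2$ of each of $f,g$ to be $O(\tau)$ (because the geometric factor $(1/3)^k$ decays fast enough that any $\Omega(\tau)$ mass above level $1$ would lose $\Omega(\tau)$ from the bound), and (ii) the first-level parts $f^{=1}$ and $g^{=1}$ to be nearly proportional, with a specific sign.

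Second, I would plug $W(f,g,h) < \epsilon$ into the formula. Since each biased inner product is at least $-1/12$ and their sum is less than $\epsilon - 1/4$, each must be at least $-1/12 - 2\epsilon$. Applying the stability discussion above to all three pairs simultaneously gives: for each of $f,g,h$, the Fourier weight above level $1$ is $O(\epsilon)$; and the first-level vectors $f^{=1},g^{=1},h^{=1}$ are pairwise nearly proportional with signs consistent with the three negative signs coming from $(-1/3)^1$ in the biased inner products.

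Third, I would invoke the FKN theorem: a balanced Boolean function whose Fourier mass on levels $\geq 2$ is at most $\delta$ is $O(\delta)$-close in $L^2$ (equivalently, in Hamming distance, since the functions are $\{0,1\}$-valued) to a $\pm$-dictator $x_i$ or $1-x_i$. This produces voters $i_f,i_g,i_h$ and signs so that $f,g,h$ are each $O(\epsilon)$-close to a corresponding dictator. The approximate proportionality of the first-level parts from step two then forces $i_f = i_g = i_h = i^*$, and constrains the signs: any inconsistent sign pattern would make the three $\pm$-dictators define a non-transitive social choice on a $\Omega(1)$ fraction of profiles, contradicting $W(f,g,h) < \epsilon$. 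The triangle inequality and the identity $\Pr[F \neq G] = \|F-G\|_2^2$ for Boolean $F,G$ then give the desired conclusion with some universal constant $K$.

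The main obstacle is preserving the \emph{linear} dependence on $\epsilon$ throughout. The Cauchy--Schwarz stability step and the FKN theorem both naturally produce losses like $\sqrt{\epsilon}$ unless one is careful: FKN must be used in its sharp Hamming-distance form, and in the level-by-level analysis one must separate the contribution of level $1$ (where a genuine projection onto the proportional direction gives a linear loss) from the geometric tail (where the factor $(1/3)^{|S|}$ already provides the linear slack without going through square roots). Matching the signs via the cross-conditions is essentially a finite combinatorial check, but it has to be done with the quantitative FKN approximants rather than with the limit dictators themselves.
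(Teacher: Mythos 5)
This theorem is not proved in the paper at all: it is quoted verbatim from~\cite{Kalai-Choice}, and the surrounding text only recalls its ingredients (the balanced-case formula of Theorem~\ref{Thm:Formula} and the Cauchy--Schwarz bound $|\langle\langle f,g \rangle\rangle_{-1/3}|\leq 1/12$). Your outline is essentially Kalai's original argument: the Fourier formula, a linear-loss stability analysis of the Cauchy--Schwarz step (your computation that the weight above level one is $O(\tau)$ is correct and does preserve linearity), and the Friedgut--Kalai--Naor theorem in its sharp form, so the approach matches the source proof. Two small points to fix in the write-up: in your second step the inequality should read that each biased inner product is at most $-1/12+2\epsilon$ (being at least $-1/12-2\epsilon$ is vacuous), and your sign analysis correctly leaves \emph{two} consistent patterns --- all three choice functions close to dictators of voter $i^*$, or all three close to anti-dictators of $i^*$; the latter is the anti-dictatorship GSWF, which is perfectly rational yet disagrees with every voter's ranking with probability bounded away from zero, so the conclusion one can actually prove (and the precise form of Kalai's theorem) is closeness to a dictatorship \emph{or} an anti-dictatorship, not the literal statement as quoted above.
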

\noindent Following Theorem~\ref{Thm:Gil-Stability}, it is natural
to ask:

\begin{question}
Amongst the GSWFs on three alternatives satisfying the assumptions
of Arrow's theorem, which is the ``most rational'' one (i.e., the
one having the highest probability of a rational outcome)?
\label{Q1}
\end{question}

\begin{remark}
The idea behind the question is similar to the idea behind the
Hilton-Milner theorem~\cite{Hilton-Milner} concerning intersecting
families. A family of subsets of a given finite set is called {\it
intersecting} if the intersection of any two elements of the
family is non-empty. The Erd\"{o}s-Ko-Rado
theorem~\cite{Erdos-Ko-Rado} asserts that an intersecting family
of $k$-element subsets of an $n$-element set has at most
${{n-1}\choose{k-1}}$ elements, and that the only maximal families
are of the form $\{S \subset \{1,\ldots,n\}: |S|=k, i \in S\}$,
for $1 \leq i \leq n$. The Hilton-Milner
theorem~\cite{Hilton-Milner} answers the question: What is the
{\it second largest} intersecting family?

Similarly, in our situation, Arrow's theorem asserts that under
some conditions, the only ``most rational'' GSWFs are the
dictatorship functions. Question~\ref{Q1} asks, what is the most
rational GSWF except for the dictatorship functions.
\end{remark}

One class of natural candidates for being the most rational GSWF
is functions close to a dictatorship. Since the probability that
the output of the GSWF differs from a dictatorship is at least
$2^{-n}$, Theorem~\ref{Thm:Gil-Stability} implies that for every balanced
function of this class, the probability of irrational choice is at
least $K^{-1} \cdot 2^{-n}$, where $K$ is a universal constant.

Another class of natural candidates is almost constant
functions. It can be shown that if all the three choice functions
are almost constant (e.g., $f(x_1,\ldots,x_n)=1$ unless
$(x_1,\ldots,x_n)=(0,0,\ldots,0)$) then the probability of
irrational choice is also $\Theta(2^{-n})$.

However, it appears that there exists a GSWF with a much lower
probability of irrational outcome:

\begin{example}
Assume that $n$ is odd, $f(x_1,x_2,\ldots,x_n)=x_1 \cdot x_2 \cdot
\ldots \cdot x_n$ is the AND function, $g=f'$ is its dual
function, and $h$ is the majority function. Let
\[
p_1=\mathbb{E}[f]=2^{-n}, \qquad p_2=\mathbb{E}[g]=1-2^{-n},
\qquad p_3=\mathbb{E}[h]=1/2.
\]
By the proof of Proposition~\ref{Prop:Lower-Bound-Biased-Product},
\[
\langle\langle f,g \rangle\rangle_{-1/3} = \langle T_{1/3} f'' , g
\rangle - p_1 p_2.
\]
By Equation~(\ref{Eq:Beckner1}),
\[
\langle T_{1/3} f'' , g \rangle = 2^{-n} \sum_{x \in \{0,1\}^n}
\Big( \frac{1}{3} \Big)^{\sum_{i=1}^n x_i} \Big(\frac{2}{3} \Big)^{n - \sum_{i=1}^n x_i} g(x) =
2^{-n} (1-\Big(\frac{2}{3}\Big)^n),
\]
and thus,
\[
\langle\langle f,g \rangle\rangle_{-1/3} = 2^{-n} (1-(2/3)^n) -
2^{-n} (1-2^{-n}) = -(1/3)^n +(1/4)^n.
\]
Similarly,
\[
\langle T_{1/3} f'' , h \rangle = 2^{-n} \sum_{x \in \{0,1\}^n}
\Big( \frac{1}{3} \Big)^{\sum_{i=1}^n x_i}
\Big(\frac{2}{3} \Big)^{n - \sum_{i=1}^n x_i} h(x) =
\]
\[
= 2^{-n} \sum_{\{x:\sum_{i=1}^n x_i > n/2\}}
\Big( \frac{1}{3} \Big)^{\sum_{i=1}^n x_i}
\Big(\frac{2}{3} \Big)^{n - \sum_{i=1}^n x_i} \leq 2^{-n}
\sum_{\{x:\sum_{i=1}^n x_i > n/2\}} \Big(\frac{1}{3}\Big)^{n/2}
\Big(\frac{2}{3}\Big)^{n/2} =
\]
\[
= \frac{1}{2} \Big(\frac{2}{9}\Big)^{n/2} \approx \frac{1}{2} \cdot 0.471^n.
\]
Hence,
\[
\langle\langle f,h \rangle\rangle_{-1/3} \leq \frac{1}{2} \cdot 0.471^n -
\frac{1}{2} \cdot 2^{-n}.
\]
Finally, since the dual function of $f$ is $g$ and since $h$ is
self-dual,
\[
\langle\langle g,h \rangle\rangle_{-1/3} = \langle\langle f,h
\rangle\rangle_{-1/3}.
\]
Therefore,
\[
W(f,g,h)=p_1 p_2 p_3 + (1-p_1)(1-p_2)(1-p_3) + \langle\langle f,g
\rangle\rangle_{-1/3} + \langle\langle g,h \rangle\rangle_{-1/3} +
\langle\langle h,f \rangle\rangle_{-1/3} \leq
\]
\[
\leq 2^{-n}(1-2^{-n}) + 0.471^n - 2^{-n} - (1/3)^n + (1/4)^n \leq
0.471^n.
\]
\end{example}

We conjecture that the GSWF in the example is the most rational
GSWF under the conditions of Arrow's theorem, but we weren't able
to prove this conjecture.

The example can be generalized to a series of examples that proves
Theorem~\ref{Thm:Instability}.

\medskip

\begin{example}
For $0<q<1/2$, for any $K>0$, and for an odd $n$, let
\[
f(x) = \left\lbrace
\begin{array}{c l}
    1, & \sum_{i=1}^n x_i \geq (1-q)n \\
    0, & \sum_{i=1}^n x_i < (1-q)n,
  \end{array}
\right.
\]
$g$ is the dual function of $f$, and $h$ is the majority function.
We use the well-known (see, for example,~\cite{Upfal}, Lemma 9.2)
inequality:
\begin{equation}
\frac{2^{n H(q)}}{n+1} \leq {{n}\choose{qn}} \leq 2^{n H(q)},
\label{Eq5.2.1}
\end{equation}
where $H(q)=-q \log_2 q - (1-q) \log_2 (1-q)$ is the value of the
entropy function at $q$. By Inequality~(\ref{Eq5.2.1}), we have
\[
\min(\mathbb{E}[f], \mathbb{E}[g], \mathbb{E}[h]) \geq \frac{2^{n
H(q)-1}}{n+1}.
\]
Hence, amongst any pair of alternatives, the probability of each alternative
to be preferred by the society over the other alternative is at least
$\eta=2^{n H(q)-1}/(n+1)$. On the other hand, using considerations
similar to those of the previous example (but more tedious), one obtains
\[
W(f,g,h) \leq 2^{n (H(q)-1)}(1-2^{n (H(q)-1)}) + 2^{n (q-1.08)} -
2^{n (H(q)-1)} < 2^{n (q-1.08)}.
\]
Since for all $q<1/2$,
\[
q-1.08 < H(q)-1,
\]
for $n=n(q,K)$ big enough we have
\[
W(f,g,h)<2^{n (q-1.08)} < \frac{2^{n (H(q)-1)}}{(n+1)K} = \frac{\eta}{K}.
\]
Therefore, substituting $\epsilon = 1-H(q)$, the assertion of
Theorem~\ref{Thm:Instability} follows.\footnote{We note that a
stronger bound on $W(f,g,h)$ for this example can be deduced from
the computation in (\cite{Mossel-Inverse},Proposition~3.9).}
\end{example}

We conclude the paper with an open question:

\begin{question}
Is this true that for small values of $\epsilon$, the GSWF on
three alternatives in which the choice functions are threshold
functions with expectations $\epsilon,1/2,$ and $1-\epsilon$
(i.e., the GSWF presented in the example above) has the highest
probability of rational choice amongst all non-dictatorial GSWFs
satisfying the IIA condition which are $\epsilon$-far from not
having all orderings of the alternatives in their range?
\end{question}

\section{Acknowledgements}

We are grateful to Tomer Schlank for helping to prove
Lemma~\ref{Lemma0.0}. It is a pleasure to thank Gil Kalai for
raising the questions addressed in the paper and for numerous
fruitful discussions. Finally, we thank Orr Dunkelman and the
anonymous referee for valuable suggestions.


\begin{thebibliography}{99}

\bibitem{Arrow} K.J. Arrow, A Difficulty in the Concept of Social
Welfare, {\it Journal of Political Economy} \textbf{58}(4) (1950),
pp.~328–-346.

\bibitem{Beckner} W. Beckner, Inequalities in Fourier
Analysis, {\it Annals of Math.} \textbf{102} (1975), pp.~159--182.

\bibitem{Ben-Or} M. Ben-Or and N. Linial, Collective Coin Flipping, in
{\it Randomness and Computation} (S. Micali, ed.), Academic Press,
New York, 1990, pp.~91--115.

\bibitem{Bonamie} A. Bonamie, Etude des Coefficients Fourier
des Fonctiones de $L^p (G)$, {\it Ann. Inst. Fourier} \textbf{20}
(1970), pp.~335--402.

\bibitem{Condorcet} M. de Condorcet, An Essay on the Application
of Probability Theory to Plurality Decision Making, 1785.

\bibitem{Erdos-Ko-Rado} P. Erd\"{o}s, C. Ko, and R. Rado,
Intersection Theorems for Systems of Finite Sets, {\it Quart. J.
Math. Oxford (2)}, \textbf{12} (1961), pp.~313--320.

\bibitem{FKG} C.M. Fortuin, P.W. Kasteleyn, and J. Ginibre,
Correlation Inequalities on Some Partially Ordered Sets, {\it
Comm. Math. Phys.} \textbf{22} (1971), pp.~89--103.

\bibitem{Geneakoplos} J. Geneakoplos, Three Brief Proofs of
Arrow's Impossibility Theorem, Cowels Foundation Discussion Paper
number 1123R, Yale University, 1997. Available online at:
http://ideas.uqam.ca/ideas/data/Papers/cwlcwldpp1123R.html

\bibitem{Gehrlein} W.V. Gehrlein, Condorcet's Paradox and the
Condorcet Efficiency of Voting Rules, {\it Math. Japon.}
\textbf{45} (1997), pp.~173--199.

\bibitem{Hilton-Milner} A.J.W. Hilton and C.E. Milner, Some
Intersection Theorems for Systems of Finite Sets, {\it Quart. J.
Math. Oxford (2)}, \textbf{18} (1967), pp.~369--384.

\bibitem{Kalai-Choice} G. Kalai, A Fourier-theoretic Perspective
on the Condorcet Paradox and Arrow's Theorem, {\it Adv. in Appl.
Math.} \textbf{29} (2002), no. 3, pp.~412--426.

\bibitem{Kalai-Private} G. Kalai, private communication, 2007.

\bibitem{Keller-new} N. Keller, A Tight Stability Version of
Arrow's Theorem, preprint, 2009.

\bibitem{Upfal} M. Mitzenmacher and E. Upfal, Probability and
Computing: Randomized Algorithms and Probabilistic Analysis,
Cambridge University Press, 2005.

\bibitem{Mossel-Inverse} E. Mossel, R. O'Donnell, O. Regev, J.E.
Steif, and B. Sudakov, Non-Interactive Correlation Distillation,
Inhomogeneous Markov Chains, and the Reverse Bonami-Beckner
Inequality, {\it Israel J. Math.} \textbf{154} (2006),
pp.~299--336.

\bibitem{Mossel} E. Mossel, R. O'Donnel, and K. Oleszkiewicz,
Noise Stability of Functions with Low Influences: Invariance and
Optimality, {\it Annals of Math.}, to appear.

\bibitem{Mossel-new} E. Mossel, Gaussian Bounds for Noise
Correlation of Functions and Tight Analysis of Long Codes,
proceedings of FOCS 2008, pp.~156--165, IEEE, 2008.

\bibitem{Mossel-Arrow} E. Mossel, A Quantitative Arrow Theorem,
preprint, 2009. Available online at:
http://arxiv.org/abs/0903.2574

\end{thebibliography}
\end{document}